\numberwithin{figure}{section}
\def\R{{\mathbb R}}
\def\C{{\mathbb C}}
\def\T{{\mathbb T}}
\def\Z{{\mathbb Z}}
\def\N{{\mathbb N}}
\def\Q{{\mathbb Q}}
\def\e{\varepsilon}
\def\build#1_#2^#3{\mathrel{
\mathop{\kern 0pt#1}\limits_{#2}^{#3}}}
\def\td_#1,#2{\mathrel{\mathop{\build\longrightarrow_{#1\rightarrow #2}^{}}}}
\DeclareFontFamily{U}{MnSymbolC}{}
\DeclareSymbolFont{MnSyC}{U}{MnSymbolC}{m}{n}
\DeclareFontShape{U}{MnSymbolC}{m}{n}{
    <-6>  MnSymbolC5
   <6-7>  MnSymbolC6
   <7-8>  MnSymbolC7
   <8-9>  MnSymbolC8
   <9-10> MnSymbolC9
  <10-12> MnSymbolC10
  <12->   MnSymbolC12}{}
\DeclareMathSymbol{\intprod}{\mathbin}{MnSyC}{'270}
\newtheorem{theorem}{Theorem}
\newtheorem{proposition}{Proposition}
\newtheorem{lemma}{Lemma}
\newtheorem{remark}{Remark}
\newtheorem{definition}{Definition}
\begin{document}
\title[Periodic solutions of BO]{On the Benjamin--Ono equation on $\T$ and \\
its periodic and quasiperiodic solutions}

\author[P. G\'erard]{Patrick G\'erard}
\address{Laboratoire de Math\'ematiques d'Orsay, CNRS, Universit\'e Paris--Saclay, 91405 Orsay, 
France}\email{{\tt patrick.gerard@math.u-psud.fr}}
\author[T. Kappeler]{Thomas Kappeler}
\address{Institut f\"ur Mathematik, Universit\"at Z\"urich, Winterthurerstrasse 190, 8057 Zurich, Switzerland} 
\email{{\tt thomas.kappeler@math.uzh.ch}}
\author[P. Topalov]{Petar Topalov}
\address{Department of Mathematics, Northeastern University, Boston, MA 0215, USA}
\email{{\tt p.topalov@northeastern.edu}}

\subjclass[2010]{ 37K15, 35B10, 35B15 primary}

%\date{March 14,  2021}

\begin{abstract}
In this paper, we survey our recent results on the Ben\-jamin-Ono equation on the torus. 
As an application of the methods developed we construct
large families of periodic or quasiperiodic solutions, which are not $C^\infty$-smooth.
\end{abstract}

\noindent
\keywords{Benjamin--Ono equation, periodic solutions,
quasi-periodic solutions, action to frequency map}

\thanks{
T.K. is partially supported by the Swiss National Science Foundation,
P.T. is partially supported by the Simons Foundation, Award \#526907.}

\maketitle

%{\em Dedicated to the memory of our friend and colleague Misha Shubin}

%\tableofcontents

\section{Introduction}\label{introduction}
In this paper we consider the Benjamin-Ono (BO) equation on $\T$,
\begin{equation}\label{BO}
\partial_t u = H\partial^2_x u - \partial_x (u^2)\,, \qquad x \in \T:= \R/2\pi\Z\,, \,\,\, t \in \R,
\end{equation}
where $u\equiv u(t, x)$ is real valued  and $H$ denotes the Hilbert transform, defined as the Fourier multiplier
$$
f = \sum_{n \in \mathbb Z} \widehat f(n) e^{ i n x}  \mapsto
 \sum_{n \neq 0} -i \ \text{\rm sign}(n) \widehat f(n) \ e^{inx} \, .
$$
%$\widehat f(n) = \frac{1}{2\pi}\int_0^{2\pi} f(x) e^{-  i n x} dx$, 
We refer to the recent survey \cite{Sa} for a discussion of the origin of this equation as a model for long, unidirectional
internal gravity waves in a two layer fluid and for a comprehensive bibliography.

Our study of the equation  \eqref{BO} focuses on the following topics: wellposedness in Sobolev spaces,
traveling waves and their orbital stability, long time behaviour of solutions (i.e., on properties of their orbits such as boundedness, 
orbital stability, recurrence),  aspects of integrability, and the construction of periodic and quasiperiodic solutions.

\medskip
\noindent
{\bf Sharp wellposedness in Sobolev spaces.} The wellposedness of \eqref{BO} in Sobolev spaces $H^s_r \equiv H^s(\T, \R)$
has been extensively studied in the last 40 years -- see e.g. references in \cite{MP}. To state our results we need to recall 
the following result of \cite{ABFS} (cf. also \cite{Sa0}). It says that for any initial data $u$ in $H^s_r$ with $s > 3/2$,
there exists a unique solution $v$ in $C(\R, H^s_r)$ of \eqref{BO} with $v|_{t=0} = u$ such that the solution map
$\mathcal{S}(t)u:=v(t)$,
\[
\mathcal S : \R \times H^s_r \to H^s_r , \, (t, u) \mapsto \mathcal S(t, u)\equiv\mathcal S(t)u,
\]
is continuous.
\begin{theorem}\label{wellposedness}
(i) For any $s > -1/2$, $\mathcal S$ extends as a continuous map, 
$\mathcal S : \R \times H^s_r \to H^s_r$ (\cite{GKT1}).\\
(ii) No such extension exists for $s=-1/2$. In more detail, there exists a sequence $(u^{(k)})_{k \ge 1}$
in $C^\infty(\T, \R)$ with the property that $u^{(k)} \to 0$ in $H^{-1/2}_r$, but the function
$
t \mapsto \langle \mathcal S(t)u^{(k)} \, | \, e^{ix} \rangle 
$
does not converge to $0$ on any time interval $I \subseteq \R$ with $|I| > 0$ (\cite{GKT1}).
\end{theorem}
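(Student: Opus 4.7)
The plan is to exploit the complete integrability of~\eqref{BO}, which admits a Lax pair built on the operator $L_u = D - T_u$ acting on the Hardy space ($T_u$ being the Toeplitz operator with symbol $u$). The key object is a Birkhoff map $\Phi : H^s_{r,0} \to \mathfrak{h}^{s+1/2}_+$, where $H^s_{r,0}$ is the mean-zero part of $H^s_r$ and $\mathfrak{h}^\sigma_+$ is the weighted $\ell^2$-space $\{(\zeta_n)_{n\ge 1}\subset\C : \sum_{n\ge1} n^{2\sigma}|\zeta_n|^2 < \infty\}$, under which the BO flow linearizes as a pure rotation $\zeta_n(t) = e^{-i\omega_n(|\zeta|^2)t}\zeta_n(0)$ with frequencies depending only on the actions $|\zeta_n|^2$. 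Both parts then turn on analytic properties of $\Phi$ and $\Phi^{-1}$ across the threshold $s=-1/2$.

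\textbf{Proof of (i).} First I would construct $\Phi$ on a dense subspace of smooth data, reading off the Birkhoff coordinates $\zeta_n$ from the eigenvalues and normalized eigenvectors of $L_u$, and verify directly (from the Lax equation) that $\Phi$ intertwines the BO flow with linear rotation on $\mathfrak{h}^{s+1/2}_+$. Then I would prove the scaling-matched bi-continuous equivalence $u \in H^s_{r,0} \Longleftrightarrow \Phi(u) \in \mathfrak{h}^{s+1/2}_+$ for every $s > -1/2$; the threshold emerges naturally, since it is exactly where the sequence weight $n^{s+1/2}$ degenerates. Finally, I would \emph{define} the extended flow by $\mathcal S(t)u := \Phi^{-1}\bigl((e^{-i\omega_n(|\zeta|^2) t}\zeta_n)_{n\ge 1}\bigr)$; the continuity of $\Phi^{\pm1}$, the unitarity of the rotation on $\mathfrak{h}^{s+1/2}_+$, and the local Lipschitz dependence of the frequency map on the actions together yield joint continuity of $\mathcal S$ on $\R \times H^s_r$. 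Agreement with the classical solution map for $s > 3/2$ is automatic by the uniqueness statement of \cite{ABFS}.

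\textbf{Proof of (ii).} To rule out any continuous extension at the endpoint I would use an explicit test sequence. Choosing smooth one-gap data $u^{(k)}$ whose Birkhoff profile has a single nonvanishing coordinate $\zeta^{(k)}_{n_k}$ of modulus $c>0$ placed at a high index $n_k \to \infty$, the Sobolev weight gives $\|u^{(k)}\|_{H^{-1/2}_r} \sim n_k^{-1/2} \to 0$. The corresponding classical solution $\mathcal S(t)u^{(k)} = \Phi^{-1}\bigl((e^{-i\omega_n(|\zeta^{(k)}|^2)t}\zeta^{(k)}_n)_n\bigr)$ is available in closed form on the finite-gap tori, and the first Fourier coefficient $\langle \mathcal S(t)u^{(k)} \,|\, e^{ix}\rangle$ reduces to a finite trigonometric polynomial in~$t$ with frequencies of the form $\omega_{n_k}-\omega_m$ and amplitude bounded below uniformly in $k$. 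Since a nonzero trigonometric polynomial cannot vanish identically on any interval of positive length, the sequence cannot converge to $0$ on any such interval, contradicting continuity of a hypothetical extension at $s=-1/2$.

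\textbf{Main obstacle.} The crux is the sharp correspondence $u \in H^s_{r,0} \Leftrightarrow \Phi(u) \in \mathfrak{h}^{s+1/2}_+$ with two-sided estimates that remain uniform as $s \searrow -1/2$. This requires precise spectral asymptotics for $L_u$ — sharp bounds on the gap lengths $\lambda_{n+1}-\lambda_n - 1$ and on normalized eigenfunctions — fine enough to compare $\|u\|_{H^s}^2$ term by term with $\sum_{n\ge 1} n^{2s+1}|\zeta_n|^2$. For (ii), the delicate step is verifying that the nonlinear composition $\Phi^{-1}$ applied to a highly oscillatory single-mode Birkhoff datum produces a first Fourier coefficient with amplitude bounded below; restricting to explicit finite-gap profiles, where $\Phi^{-1}$ is algebraically computable, should bypass this issue.
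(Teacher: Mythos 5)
Your part (i) follows exactly the route this survey intends: Theorem \ref{wellposedness}(i) is deduced from the nonlinear Fourier transform of Theorem \ref{nonlinear FT} by conjugating the flow to the rotation $\zeta_n \mapsto e^{i\omega_n t}\zeta_n$ on $h_+^{s+1/2}$ (plus the Galilean reduction $u_a(t,x)=a+u(t,x-2at)$ to pass from $H^s_{r,0}$ to $H^s_r$, which you should state explicitly). Up to sign conventions this is the paper's argument, and the genuinely hard content is, as you say, the bi-continuity of $\Phi$ down to $s>-1/2$, which is the substance of \cite{GKT1}.

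Part (ii), however, contains a fatal gap: one-gap data cannot work. By Theorem \ref{traveling waves} the one-gap potentials are exactly the traveling waves with profile $U_{r,n_k,\alpha,a}=n_k U_r(n_k x+\alpha)+a$, whose Fourier transform is supported on integer multiples of $n_k$; since the flow merely translates the profile, $\langle \mathcal S(t)u^{(k)}\,|\,e^{ix}\rangle \equiv 0$ for every $k$ with $n_k\ge 2$, so your test sequence trivially \emph{does} converge to $0$. Your norm computation is also off: for a single excited mode $|\zeta_{n_k}|=c$ one has $\|u^{(k)}\|_{H^{-1/2}_r}\asymp \|\zeta^{(k)}\|_{h^0_+}=c$ (already at the level of the linearization $\widehat u(n)\approx -\sqrt{n}\,\zeta_n$ from Remark \ref{Birkhoff coordinates}(i), and confirmed by the explicit profile), not $n_k^{-1/2}c$, so the data do not tend to $0$ in $H^{-1/2}_r$ unless $c\to 0$, in which case everything degenerates. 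The underlying conceptual point is that the endpoint failure of the equivalence $\|u\|_{H^{-1/2}}\sim\|\Phi(u)\|_{h^0_+}$ is an intrinsically multi-mode phenomenon — on single-mode data the equivalence still holds — so the counterexample must excite many modes (roughly, $\widehat u(1)$ is a correlation sum of the type $\sum_n c_n\,\zeta_{n+1}\overline{\zeta_n}\,e^{i(\omega_{n+1}-\omega_n)t}$, and one exploits quasi-orthogonality of the frequencies $\omega_{n+1}-\omega_n\approx 2n+1$ on a fixed time interval to get a lower bound from $\sum_n|\zeta_{n+1}\overline{\zeta_n}|^2$ even though the sum at $t=0$ is small). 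Finally, note that ``a nonzero trigonometric polynomial does not vanish on an interval'' is not enough: you need a lower bound on $\sup_{t\in I}|\cdot|$ that is uniform in $k$, and mere nonvanishing for each fixed $k$ is compatible with uniform convergence to $0$.
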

Note that $\int_0^{2\pi} u dx$ is a prime integral of \eqref{BO}.
In particular, for any $a \in \R$ and $s > -1/2$, the solution map $\mathcal S(t)$ leaves the subspace
\[
H^s_{r, a} := \big\{ u \in H^s_r \,\big|\, \int_0^{2\pi} u\,dx = a\big\} 
\]
invariant.
We denote by  $\mathcal S_a(t)$ the restriction of $\mathcal S(t)$ to $H^s_{r,a}$.

\smallskip

\noindent
{\bf Addendum to Theorem \ref{wellposedness}.} {\em In \cite{GKT3}, \cite{GKT4} we prove that 
for any $t \in \R$, $a \in \R$, and  $-1/2 < s < 0$, $\mathcal S_a(t) : H^s_{r,a} \to H^s_{r,a}$ is nowhere locally 
uniformly continuous and that for any $s \ge 0$, $\mathcal S_a(t) : H^s_{r, a} \to H^s_{r, a}$ is real analytic.}

\begin{remark}
Theorem \ref{wellposedness}(i) improves on a result by Molinet \cite{Mol} (cf. also \cite{MP}),
saying that for any $s \ge 0$, $\mathcal S : \R \times H^s_r \to H^s_r$ extends as a continuous
map and Theorem \ref{wellposedness}(ii) improves on a result by Angulo Pava $\&$ Hakkaev \cite{AH}, saying
that no such extension exists for $s < -1/2$. 
The Addendum to Theorem \ref{wellposedness} improves on a result by Molinet \cite{Mol}--\cite{Mol0}, saying that for any 
$a \in \R$ and $s \ge 0$, $\mathcal S_a(t) : H^s_{r,a} \to H^s_{r,a}$ is analytic near the zero solution.

We refer to \cite{MP} for a comprehensive bibliography on the wellposedness of \eqref{BO}. 
We note that our method of proof is different from the methods used in the papers cited above
(cf. Theorem \ref{nonlinear FT} below). 
\end{remark}

\smallskip
\noindent
{\bf Traveling waves.} Recall that a solution $t \mapsto \mathcal S(t)U$ of \eqref{BO} with $U\in H^s_r$, $s > -1/2$,
is said to be a traveling wave with profile $U$ and velocity $c\in \R$ if $ \mathcal S(t)U = U(\cdot - ct)$.
Amick $\&$ Toland \cite{AT} listed all $C^\infty$--smooth profiles of traveling waves of \eqref{BO},
$$
U_{r, N, \alpha, a}(x) = N U_r(Nx + \alpha) + a\, ,\qquad N\in\N,\,\alpha \in \T, \, a \in \R\, ,
$$
where $U_r(x) = \frac{1-r^2}{1 - 2r\cos x + r^2}$, $0 < r < 1$, are the traveling wave profiles
with corresponding velocity $c_r := \frac{1+r^2}{1-r^2}$, found by Benjamin \cite{Benj}.

\begin{theorem}\label{traveling waves} For any $s > -1/2$ the following holds:\\
(i) Any traveling wave in $H^s_r$ has a profile of the form $U_{r, N, \alpha, a}$ and hence in particular
is $C^\infty$--smooth (\cite{GKT1}). \\
(ii) Any traveling wave is orbitally stable in $H^s_r$ (\cite{GKT1}, Remark \ref{tori are Liouville stable} below).
\end{theorem}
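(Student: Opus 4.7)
The plan is to exploit the integrability of the Benjamin--Ono equation via the nonlinear Fourier transform / Birkhoff coordinates $\zeta=(\zeta_n)_{n\ge 1}$ underlying the results announced in the introduction (cf.\ the Addendum to Theorem~\ref{wellposedness}). Two properties will be central: in Birkhoff coordinates the BO flow is \emph{linear}, $\zeta_n(t)=e^{-i\omega_n t}\zeta_n(0)$, where each BO frequency $\omega_n$ is an explicit function of the actions $\gamma_k:=|\zeta_k|^2/k$ and of the mean $a$; and the spatial translation $\tau_\beta:u(\cdot)\mapsto u(\cdot-\beta)$ acts diagonally, $\zeta_n\mapsto e^{-in\beta}\zeta_n$.

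For part~(i), let $U\in H^s_{r,a}$ be a traveling wave of velocity $c$, so that $\mathcal S(t)U=\tau_{ct}U$. Writing the time evolution in Birkhoff coordinates in these two different ways gives $e^{-i\omega_n t}\zeta_n(U)=e^{-inct}\zeta_n(U)$ for every $n\ge 1$ and every $t\in\R$, whence $\omega_n=nc$ on the support $S:=\{n\ge 1:\zeta_n(U)\ne 0\}$. The next step is to show, from the explicit formula for the BO frequencies on $\T$, that the system $\omega_n/n=c$, $n\in S$, forces $S$ to reduce to a single integer $N\ge 1$. The essential ingredient is a strict monotonicity/convexity of $\gamma\mapsto \omega_n(\gamma,a)$ which prevents two distinct indices from being simultaneously compatible with the affine constraint $\omega_n=cn$. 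Once $|S|=1$ has been established, the datum $U$ corresponds to a one-gap potential, so by the inverse nonlinear Fourier transform $U$ is real-analytic in $x$, in particular $C^\infty$; matching with the Amick--Toland list then forces $U=U_{r,N,\alpha,a}$ for some $r\in(0,1)$ and $\alpha\in\T$.

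For part~(ii), fix a traveling wave profile $U_*:=U_{r,N,\alpha,a}$ and let $\mathcal O:=\{\tau_\beta U_*:\beta\in\T\}\subset H^s_r$ be its translation orbit. By part~(i), $\Phi(\mathcal O)$ is the single circle $\mathcal C:=\{\zeta:|\zeta_N|=\rho_0,\ \zeta_k=0\text{ for }k\ne N\}$, where $\rho_0=\rho_0(r)$ and $\Phi$ denotes the nonlinear Fourier transform. Assume $u_0\in H^s_r$ satisfies $\mathrm{dist}_{H^s}(u_0,\mathcal O)<\varepsilon$; since $\Phi$ is a homeomorphism onto its image, $\Phi(u_0)$ lies close to $\mathcal C$ in the corresponding weighted sequence space. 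The BO flow preserves each modulus $|\zeta_n(t)|$ and merely rotates the phases, so $\Phi(\mathcal S(t)u_0)$ remains close to $\mathcal C$ uniformly in $t$. Because translations act transitively on $\mathcal C$ via $\beta\mapsto e^{-iN\beta}\zeta_N$, pulling back by $\Phi^{-1}$ gives $\mathrm{dist}_{H^s}(\mathcal S(t)u_0,\mathcal O)<C\varepsilon$ for all $t$, i.e.\ orbital stability.

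The main obstacle is the support reduction $|S|=1$ in part~(i): it relies on a precise algebraic/monotonicity property of the BO frequencies $\omega_n(\gamma,a)$ that has no counterpart at the linearized level and is specific to the BO dispersion relation on $\T$. Everything else then follows transparently from the fact that both the BO flow and the translation group act diagonally in the Birkhoff coordinates, so that the qualitative analysis of traveling waves becomes a linear-algebra exercise in sequence space.
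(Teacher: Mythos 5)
Your overall route is the one the paper itself takes, to the extent that it proves this theorem at all: part (i) is deferred to \cite{GKT1} (the paper only records that one--gap solutions coincide with traveling waves), and part (ii) is exactly the Lyapunov stability of the isospectral tori of Remark \ref{tori are Liouville stable}, combined with the fact that translation acts transitively on the one--dimensional torus $\text{Iso}(\xi)$ of a one--gap potential. Your part (ii) is sound up to one imprecision: $\Phi^{-1}$ is only continuous (it maps bounded sets to bounded sets but is not Lipschitz, and for $-1/2<s<0$ not even locally uniformly continuous in general), so you cannot conclude $\mathrm{dist}_{H^s}(\mathcal S(t)u_0,\mathcal O)<C\e$; what you do get, using that the distance in $h^{s+1/2}_+$ to the circle $\mathcal C$ depends only on the conserved moduli $|\zeta_n|$ and that $\mathcal C$ is compact, is the ``for every $\e$ there is $\delta$'' statement, which is all that orbital stability requires.

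The one genuine gap is in part (i): you yourself identify the support reduction $|S|=1$ as ``the main obstacle'' and then assert, without proof, that some monotonicity or convexity of the frequency map delivers it. Since this is the entire mathematical content of part (i), it must be carried out; fortunately it follows in one line from \eqref{formula frequencies intro}. With $\gamma_k=|\zeta_k|^2$ (note that your normalization $\gamma_k=|\zeta_k|^2/k$ is not the paper's), one computes for $m<n$
\begin{equation*}
\frac{\omega_m}{m}-\frac{\omega_n}{n}
=(m-n)\Big(1+\frac{2}{mn}\sum_{k=1}^{m}k\gamma_k\Big)
+\frac{2}{n}\sum_{k=m+1}^{n}(k-n)\gamma_k\;<\;0\,,
\end{equation*}
so $n\mapsto\omega_n/n$ is strictly increasing and the relation $\omega_n=cn$ can hold for at most one index; the support is therefore a singleton (the empty support yields the constants, which are excluded from, or treated separately in, the notion of traveling wave), and the explicit form of the one--gap potentials, or Amick--Toland, finishes the argument as you say. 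You also use silently that spatial translation acts diagonally, $\zeta_n(u(\cdot-\beta))=e^{-in\beta}\zeta_n(u)$, and that a nonzero mean $a$ is removed by the Galilean shift $u\mapsto a+u(\cdot-2at)$ at the cost of replacing $c$ by $c-2a$; both facts are true and proved in \cite{GK}, \cite{GKT1}, but neither is stated in this paper, so they should be flagged as inputs rather than taken for granted.
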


\begin{remark}
Theorem \ref{traveling waves}(ii) improves on a result by Angulo Pava $\&$ Natali \cite{AN}, saying that the traveling
waves with profile of the form $U_{r, N, \alpha, a}$ are orbitally stable in $H^{1/2}_r$.
\end{remark}

\noindent
{\bf Long time behaviour of solutions.} 
For solutions of the Benjamin-Ono equation on the line, the question of main interest
concerning their long time behaviour is to know whether they admit an asymptotic description as $t \to \infty$.
Since $\T$ is compact, such a description typically does not exist for solutions of \eqref{BO}.
In such a case, one is interested to know properties of the orbits of solutions such as boundedness, 
orbital stability, or recurrence.

\begin{theorem}\label{long time}
For any $u \in H^s_r$ with $s>  - 1/2$ the following holds:\\
(i) The orbit $\{ \mathcal S(t)u \,|\, t \in \R \}$ is relatively compact in $H^{s}_r$ (\cite{GKT1}).\\
(ii) The solution $\R \to H^s_r, \, t \mapsto \mathcal S(t)u$, is almost periodic (\cite{GKT1}).\\
(iii) $\sup_{t \in \R} \|  \mathcal S(t)u \|_s \le M$ where $M > 0$ can be chosen 
uniformly on bounded subsets of $H^s_r$ (\cite{GKT1}).
\end{theorem}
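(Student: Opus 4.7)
The plan is to reduce all three statements to properties of the linear flow in Birkhoff coordinates, using the nonlinear Fourier transform (Theorem \ref{nonlinear FT} below). Concretely, I assume the existence of a homeomorphism $\Phi : H^s_{r,a} \to \mathfrak{h}^s_+$ onto a weighted sequence space $\mathfrak{h}^s_+ \subset \ell^2(\N, \C)$, bounded together with its inverse on bounded sets, that conjugates the BO flow to a diagonal linear flow: if $\zeta = \Phi(u)$, then $\Phi(\mathcal S(t)u)_n = e^{-i\omega_n(I)t}\zeta_n$ for $n \ge 1$, where $I_n = |\zeta_n|^2$ are the conserved actions and $\omega_n(I) \in \R$ are the BO frequencies depending only on $I = (I_k)_{k \ge 1}$ and on $a$.

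For (i), observe that in Birkhoff coordinates the orbit of $\zeta$ lies in the infinite-dimensional torus
\[
\mathcal{T}_\zeta := \prod_{n \ge 1}\{ w \in \C : |w| = |\zeta_n|\}.
\]
This set is compact in $\mathfrak{h}^s_+$: it is closed and bounded, and its tail is controlled uniformly by $\sum_{n \ge N} n^{2s}|\zeta_n|^2 \to 0$ as $N \to \infty$, which yields sequential precompactness. Pulling back by the continuous map $\Phi^{-1}$ shows that $\{\mathcal S(t)u : t \in \R\}$ is relatively compact in $H^s_r$. For (ii), the map $t \mapsto (e^{-i\omega_n(I)t}\zeta_n)_{n \ge 1}$ is almost periodic as a function $\R \to \mathfrak{h}^s_+$: truncation at the first $N$ coordinates yields a quasi-periodic trigonometric polynomial, and the tail is uniformly (in $t$) bounded by $\bigl(\sum_{n > N} n^{2s}|\zeta_n|^2\bigr)^{1/2} \to 0$. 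Composition with $\Phi^{-1}$, which is uniformly continuous on the compact set $\mathcal{T}_\zeta$, preserves almost periodicity, so $t \mapsto \mathcal S(t)u$ is almost periodic as an $H^s_r$-valued function. For (iii), the orbit in $\mathfrak{h}^s_+$ remains on $\mathcal{T}_\zeta$, hence inside the closed ball of radius $\|\Phi(u)\|_{\mathfrak{h}^s_+}$; local boundedness of $\Phi$ and $\Phi^{-1}$ on bounded sets then transfers this to a bound $\|\mathcal S(t)u\|_s \le M$ depending only on $\|u\|_s$.

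The main obstacle lies not in the above rather routine topological arguments but in the construction of $\Phi$ itself, in particular in the proof that it extends to a homeomorphism between $H^s_{r,a}$ and $\mathfrak{h}^s_+$ in the low-regularity regime $-1/2 < s < 0$ and that it is bi-Lipschitz on bounded sets. Once Theorem \ref{nonlinear FT} is granted, the reduction to a diagonal linear flow on a weighted sequence space makes (i)--(iii) follow from standard facts about almost periodic functions and compact infinite-dimensional tori.
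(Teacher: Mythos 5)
Your proposal is correct and follows exactly the route the paper indicates: the paper proves none of (i)--(iii) directly but states that they "rely on the integrability of the BO equation," i.e.\ on the nonlinear Fourier transform $\Phi$ of Theorem~\ref{nonlinear FT}, which conjugates the flow to the diagonal linear flow $\zeta_n \mapsto e^{i\omega_n t}\zeta_n$ on the compact torus $\mathrm{Iso}(\zeta)$, and your compactness, uniform-tail, and uniform-continuity arguments are the standard way to deduce (i)--(iii) from (NF1)--(NF2). The only cosmetic points are that the relevant sequence space is $h_+^{s+1/2}$ (weight $n^{2s+1}$) after reducing to $H^s_{r,0}$ via the conserved mean and the Galilean shift $u_a(t,x)=a+u(t,x-2at)$, and that for (iii) you need only that $\Phi$ and $\Phi^{-1}$ map bounded sets to bounded sets, not the stronger bi-Lipschitz property you mention at the end.
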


\noindent
{\bf Addendum to Theorem \ref{long time}.} {\em The solutions of Theorem \ref{wellposedness} of \eqref{BO} are orbitally stable 
in the sense explained in Remark \ref{tori are Liouville stable} below. }
\begin{remark}\label{remark 3}
We point out that a solution $\R \to H^s_r, \, t \mapsto \mathcal S(t)u$ of \eqref{BO} being almost periodic implies that it
is Poincar\'e recurrent. In particular, Theorem \ref{long time}(ii) improves on results by Deng $\&$ Tzvetkov $\&$ Visciglia \cite{DTV}
and Deng \cite{Deng}. In these papers (cf. also references in \cite{DTV}, \cite{Deng})
 invariant measures are constructed on Sobolev spaces of various order of regularity, which then are used to show that 
for a.e. initial data, the corresponding solutions are Poincar\'e recurrent.\\
Theorem \ref{long time}(iii) improves on results of similar type, which can be derived from the BO hierarchy, 
obtained in \cite{BK}, \cite{Nak}.  The BO hierarchy consists of a sequence $\mathcal{H}_j(u)$, $j \ge 0$, 
of prime integrals of \eqref{BO}.
The boundedness of $(\mathcal{H}_j)_{0 \le j \le n}$ can be shown to be equivalent to the boundedness of the $H^{n/2}$--norm.
Furthermore, Talbut \cite{Tal} proved such estimates for the $H^s$--norms, $-1/2 < s < 0$,
for smooth solutions of \eqref{BO}.
\end{remark}

\noindent
{\bf Nonlinear Fourier transform.} Our proofs of Theorem \ref{wellposedness} - Theorem \ref{long time}
rely on the integrability of the BO equation. In fact, we show that this equation is integrable in the strongest possible sense.
To state this result, we first need to introduce some more notation. 
As already mentioned above, $\int_0^{2\pi} u dx$ is a prime integral of \eqref{BO}.
Furthermore, for any solution $u(t, x)$ of \eqref{BO} and any $a \in \R$, 
$
u_a(t, x) = a + u(t, x - 2at)
$
is again a solution. We therefore restrict ourselves to consider equation \eqref{BO} on the Sobolev spaces $H^s_{r,0}$.
By $h^\sigma_+\equiv h^\sigma(\N, \C)$, $\sigma \in \R$, we denote the weighted $\ell^2$--sequence spaces defined by 
$h^\sigma_+:= \{ z = (z_n)_{n \ge 1} \,|\, z_n \in \C; \|z\|_\sigma < \infty \}$, where
$$
 \|z\|_\sigma := \big( \sum_{n=1}^\infty n^{2 \sigma} |z_n|^2 \big)^{1/2} \, .
$$
\begin{theorem}\label{nonlinear FT}(\cite{GK}, \cite{GKT1})
There exists a map
$$
\Phi : \bigsqcup_{s > -1/2} H^s_{r,0} \to  \bigsqcup_{s > -1/2} h_+^{s+1/2}, \ u \mapsto \zeta(u):= (\zeta_n(u))_{n \ge 1}
$$
so that the following properties hold for any $s > -\frac12$:\\
(NF1) $\Phi : H^s_{r,0} \to h_+^{s+1/2}$ is a homeomorphism and $\Phi$ and its inverse map bounded
subsets to bounded ones.\\
(NF2) For any $u \in H^s_{r,0}$, and any $n \ge 1$, $\zeta_n(\mathcal S(t)u) = e^{i\omega_n t} \zeta_n(u)$ where
\begin{equation}\label{formula frequencies intro}
\omega_n \equiv \omega_n(u) := n^2 - 2\sum_{k=1}^n k |\zeta_k(u)|^2 - 2n \sum_{k > n}  |\zeta_k(u)|^2\, .
\end{equation}
It follows that for any $n \ge 1$, $|\zeta_n(\mathcal S(t)u)|^2$ is independent of $t$. \\
(NF3) The map $\Phi$ does not continuously extend to a map $H^{-1/2} \to h^0_+$.
\end{theorem}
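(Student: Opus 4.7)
The plan is to construct $\Phi$ as a nonlinear Birkhoff map attached to a Lax-pair representation of \eqref{BO}. The first step is to introduce the Lax operator $L_u := -i\partial_x - T_u$ acting on the Hardy space $H_+ := \{ f \in L^2(\T,\C) : \widehat f(n) = 0 \text{ for all } n < 0\}$, where $T_u f := \Pi(uf)$ and $\Pi$ is the Szeg\H{o} projector. For $u \in H^s_{r,0}$ with $s > -1/2$ one verifies by form-boundedness that $L_u$ is self-adjoint, bounded below, with purely discrete simple spectrum $\lambda_0(u) < \lambda_1(u) < \cdots$ and Weyl-type asymptotics $\lambda_n(u) = n + O(1)$. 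The nonnegative gaps $\gamma_n(u) := \lambda_n(u) - \lambda_{n-1}(u) - 1$ will serve as action variables, and a trace-formula analysis for $L_u - D$ (where $D=-i\partial_x$) should yield the two-sided estimate $\sum_{n \ge 1} n^{2s+1}\gamma_n(u) \sim \|u\|_{H^s}^2$, which is the source of the regularity shift $s \mapsto s+1/2$.

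To define angles I would normalize the eigenfunctions $f_n$ of $L_u$ by $\|f_n\|_{L^2} = 1$ and set
\[
\zeta_n(u) := \sqrt{\gamma_n(u)}\, e^{i\theta_n(u)}, \qquad e^{i\theta_n(u)} := \frac{\langle S f_{n-1}, f_n\rangle}{|\langle S f_{n-1}, f_n\rangle|},
\]
where $S$ denotes multiplication by $e^{ix}$ on $H_+$; the denominator does not vanish, thanks to a tridiagonal-matrix structure of $L_u$ in the $(f_n)$ basis. For (NF2) I would then differentiate the eigenvalue relation $L_{u(t)} f_n(t) = \lambda_n(u(t)) f_n(t)$ using the Lax identity $\dot L_u = [B_u, L_u]$, with $B_u$ determined by the $L^2$-gradient of the BO Hamiltonian. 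This yields at once $\dot\lambda_n = 0$ (hence conservation of $|\zeta_n|^2$) and, after choosing the phase of $f_n$ to cancel the diagonal of $B_u$, gives $\dot f_n = B_u f_n$. A short calculation then produces $\dot\zeta_n = i\omega_n \zeta_n$, and formula \eqref{formula frequencies intro} follows by evaluating $B_u$ in the $f_n$ basis and summing with the telescoping identity $\sum_{k \le n} \gamma_k(u) = \lambda_n(u) - \lambda_0(u) - n$.

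For (NF1), continuity and the bounded-to-bounded property of $\Phi$ follow from analytic perturbation theory for $L_u$ combined with the trace-formula estimates above. Bijectivity would first be established on the stratum of smooth, finite-gap potentials, where an explicit inverse can be written directly from the normalized eigenfunctions, and then extended to all of $H^s_{r,0}$ by a density-plus-properness argument; continuity of $\Phi^{-1}$ comes from the reverse bound $\|u\|_{H^s}^2 \lesssim \sum_{n \ge 1} n^{2s+1}|\zeta_n(u)|^2$. The main obstacle I anticipate is controlling $\Phi^{-1}$ in the negative regime $-1/2 < s < 0$, where $T_u$ is only relatively form-bounded against $D$ and the standard Hilbert--Schmidt trace formulas no longer converge; one would have to regularize $\mathrm{tr}(L_u - D)$ in a Koplienko-type spirit and propagate the renormalized identities through the argument.

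Finally, (NF3) follows cleanly from Theorem \ref{wellposedness}(ii): if $\Phi$ extended continuously to $H^{-1/2}_{r,0} \to h^0_+$, then since the flow in $\zeta$-coordinates is the pointwise rotation $\zeta_n \mapsto e^{i\omega_n(u) t} \zeta_n$, which is continuous on $h^0_+$, one could conjugate back to obtain a continuous extension of $\mathcal S(t)$ to $H^{-1/2}_{r,0}$. Testing such an extension against $e^{ix}$ would then contradict the counterexample sequence $(u^{(k)})$ exhibited in Theorem \ref{wellposedness}(ii).
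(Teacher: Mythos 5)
First, a point of reference: this survey does not itself prove Theorem \ref{nonlinear FT}; the result is quoted from \cite{GK} and \cite{GKT1}, and the paper only records the ingredients of the construction (the Lax operator \eqref{Lax}, the identification of the actions with the spectral gaps \eqref{def gaps}, and the generating function \eqref{def generating function}). Your proposal correctly reconstructs this architecture --- Lax operator on the Hardy space, gaps as actions, eigenfunction phases as angles, Lax-pair evolution for the frequencies --- so at the level of strategy you are aligned with the cited proof. But as a proof the proposal has genuine gaps, concentrated exactly where the real difficulty lies. (i) The two-sided estimate $\sum_n n^{2s+1}\gamma_n(u)\sim\|u\|_{H^s}^2$ is asserted as if it followed from a routine trace-formula analysis; for $s=0$ there is indeed an exact trace identity, but for $-1/2<s<0$ the potential $u$ is a distribution, $T_u$ is not a priori defined, and establishing that $L_u$ is self-adjoint with simple discrete spectrum and that the gap sequence lies in $\ell^{1,1+2s}$ with norm controlled two-sidedly by $\|u\|_{H^s}^2$ on bounded sets is the main content of \cite{GKT1} and \cite{GKT2}; your own closing sentence of the (NF1) paragraph concedes that you do not know how to do this. (ii) Your angle variable $e^{i\theta_n}=\langle Sf_{n-1},f_n\rangle/|\langle Sf_{n-1},f_n\rangle|$ is not the one used in \cite{GK}, where $\zeta_n$ is built from $\langle 1\,|\,f_n\rangle$ normalized by an explicit spectral constant $\kappa_n$ (and the condition $\langle Sf_{n-1}\,|\,f_n\rangle>0$ is instead used to \emph{fix} the phase of $f_n$). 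With your choice nothing guarantees that the resulting map is canonical, that $\zeta_n$ depends continuously on $u$ through the degeneracy $\gamma_n=0$, or that the evolution is exactly $e^{i\omega_n t}$ with $\omega_n$ given by \eqref{formula frequencies intro}; each of these must be verified and none is.

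Second, the deduction of (NF3) from Theorem \ref{wellposedness}(ii) does not go through as written. A continuous extension of $\Phi$ to $H^{-1/2}_{r,0}\to h^0_+$ need be neither injective nor surjective and need not intertwine the flows on the extended domain, so one cannot ``conjugate back'' to produce a continuous extension of $\mathcal S(t)$; what your argument actually rules out is a continuous, flow-conjugating extension of $\Phi^{-1}$, which is a different and weaker statement than (NF3). To reach a contradiction with Theorem \ref{wellposedness}(ii) along your lines one would additionally need an explicit formula for $\langle \Phi^{-1}(\zeta)\,|\,e^{ix}\rangle$ that is continuous on $h^0_+$; in \cite{GKT1} the failure of extension at $s=-1/2$ is instead exhibited directly on explicit families of potentials. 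There is also a circularity risk to flag: Theorem \ref{wellposedness}(ii) is itself proved in \cite{GKT1} by means of the Birkhoff coordinates, so using it as an input to (NF3) requires care about the order in which the statements are established.
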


\noindent
{\bf Addendum to Theorem \ref{nonlinear FT}.} {\em In \cite{GKT3}, \cite{GKT4}, we prove that for any $s > -1/2$,
$\Phi : H^s_{r, 0} \to h_+^{s+1/2}$ and 
$\Phi^{-1} : h_+^{s+1/2} \to H^s_{r, 0}$ are real analytic.}

\begin{remark}\label{Birkhoff coordinates}
(i) The differential $d_0\Phi$ of $\Phi$ at $0$ is given by the weighted Fourier transform,
$\mathcal F[v] = - \big(\frac{1}{\sqrt{n}}\widehat v(n)\big)_{n \ge 1} $. Furthermore, the linearization of \eqref{BO} at the zero solution
is given by $\partial_t v = H\partial^2_x v$. The solution of the latter equation in $H^s_{r,0}$ are given by
$
\sum_{n\ne 0} e^{i\,\text{\rm sign}(n)n^2t} \widehat v(n) e^{inx} \, .
$
For this reason, we refer to $\Phi$ as a nonlinear Fourier transform.\\
(ii) It is well known that \eqref{BO} is Hamiltonian,
$$
\partial_t u = \partial_x \nabla \mathcal H\, , \qquad
\mathcal H(u) := \frac{1}{2\pi} \int_0^{2\pi} \Big(\frac{1}{2}\big(|\partial_x|^{1/2} u\big)^2 - \frac{1}{3} u^3\Big)\,dx,
$$
where $\partial_x$ is the Poisson structure, which corresponds to the Poisson bracket,
defined for functionals $F,$ $G$ on $H^s_{r,0}$ with sufficiently regular $L^2$--gradients,
$$
\{ F , \, G \} (u) = \frac{1}{2\pi} \int_0^{2\pi} (\partial_x \nabla F) \nabla G dx\, .
$$
We prove that for any $n$, $m \ge 1$,
$$
\{ \zeta_n , \zeta_m \} = 0\, , \qquad  \{ \zeta_n , \overline \zeta_m \} = -i \delta_{nm}\, ,
$$
implying that $\{ |\zeta_n|^2 , |\zeta_m|^2 \} = 0$. In addition, we show that $\mathcal H \circ \Phi^{-1}$
is a function of $|\zeta_n|^2$, $n \ge 1$, alone. Hence $\Phi$ is {\em canonical}, $|\zeta_n|^2$, $n \ge 1$, are
actions, and the phases of $\zeta_n$, $n \ge 1$, angles. In this way, the quantities $\zeta_n$, $n \ge1$, are globally defined 
Birkhoff coordinates of \eqref{BO} on $H^s_{r,0}$ for any $s > -1/2$.\\
(iii) For any $n \ge 1$, $\omega_n$ is referred to as the $n$th BO frequency. By \eqref{formula frequencies intro},
it is an {\em affine} function of the actions.
\end{remark}

\begin{remark}\label{tori are Liouville stable}
By Theorem \ref{nonlinear FT} one infers that for $\xi \in h_+^{s+1/2}$, $s > -1/2$, 
$$
\text{Iso}(\xi) :=\big\{ u \in H^s_{r, 0} \,\big|\, |\zeta_n(u)|^2 = |\xi_n|^2 \ \forall \, n \ge 1\big\} \, , 
$$ 
is an invariant torus for \eqref{BO}. Any such torus is {\em Lyapunov stable} in the sense that for any initial data $u \in H^s_{r, 0}$
near $\text{Iso}(\xi)$, the solution $\mathcal S(t)u$ stays close to $\text{Iso}(\xi)$ for all $t \in \R$.
\end{remark}

In the remaining part of this introduction, we briefly comment on applications of Theorem \ref{nonlinear FT}
and on elements of its proof. We keep our exposition as short as possible and refer to our papers for more details.

The Benjamin-Ono equation admits finite dimensional integrable subsystems. To define them, we need to introduce some
more notation. We say that $u \in \bigcup_{s > -1/2}H^s_{r,0}$ is a {\em finite gap potential} if there exists $N \in \N$ so that
$\zeta_n(u) = 0$ for any $n > N$. We denote by $\mathcal U_N$ the set of all such potentials in $\bigcup_{s > -1/2}H^s_{r,0}$ 
with $\zeta_N \ne 0$. Furthermore, we say that $u \in \bigcup_{s > -1/2}H^s_{r,0}$ is a {\em one gap potential} if there exists 
$N \ge 1$ so that $\zeta_n(u) \ne 0$ if and only if $n = N$. In particular such a potential
is in $\mathcal U_N$. Theorem \ref{nonlinear FT} implies that for any $N \ge 1$, $\mathcal U_N$ is contained 
$\bigcap_{s > -1/2}H^s_{r,0}$. An element $u \in \mathcal U_N$ is of the form 
$$
u(x) = - 2\mbox{Re} \Big( e^{i x} \frac{Q_N'(e^{i x})}{Q_N(e^{i x})} \Big) \, , \qquad
Q_N(z) = \prod_{j=1}^N(1 - q_j z)\, , 
$$
where $0 < |q_j| < 1$ for any $1 \le j \le N$ (\cite{GK}). The time evolution of potentials in $\mathcal U_N$
can be explicitly described, using the frequencies, defined in (NF2) of Theorem \ref{nonlinear FT}.
These solutions coincide with the ones constructed by Satsuma $\&$ Ishimori \cite{SI} and further
studied by Dobrokhotov $\&$ Krichever \cite{DK}. We refer to these solutions as finite gap solutions or
(periodic in $x$) multi--solitons. They are quasiperiodic in time. The one gap solutions coincide with the
traveling waves of Theorem \ref{traveling waves} and are periodic in time.
%(i.e. with $a=0$ in the list of \cite{AT}).

In Section \ref{application} we address the questions whether there are periodic and quasiperiodic solutions 
in time of \eqref{BO}  which are {\em not} (multi--)solitons.  
Both questions are answered affirmatively -- see Theorem \ref{periodic solutions which are not finite gap},
Proposition \ref{periodic finite gap solutions},
and Theorem \ref{nonsmooth qp solutions}.  The proof of these results is based on the action to frequency map, 
studied in Section \ref{action to frequency}. To the best of our knowledge, results of this type are not known 
for integrable PDEs such as the Korteweg-de Vries (KdV) equation or the nonlinear Schr\"odinger (NLS) equation.
We expect, but have not verified, that such results  
also hold for many of these PDEs although the action to frequency map
might be significantly more complicated  and hence the results more difficult to prove. 
In this connection, we only mention that the Hessian of the KdV and the NLS Hamiltonian are known to be strictly convex 
in a neighborhood of  the zero solution (cf. \cite{KMMT}, \cite{Molnar} for details).

\smallskip
A key ingredient of the proof of Theorem \ref{nonlinear FT} is the Lax pair formulation of \eqref{BO},
$\partial_t L_u = B_uL_u - L_uB_u $, where
\begin{equation}\label{Lax}
L_u = -i\partial_x - T_u\, , \qquad
B_u:= i (T_{|\partial_x|u} - T^2_u)\, ,
\end{equation} 
and $T_u$ denotes the Toeplitz operator, defined for potentials $u$ in $H^s_{r, 0}$, $s > -1/2$.
Here, the pseudo-differential operators $L_u$ and $B_u$ act on the Hardy space 
$$
H_+:=\big\{ u \in H^0(\T,\C) \,\big|\,\widehat u(n) = 0 \ \forall \, n < 0\big\}
$$
with $L_u$ being self-adjoint (cf. Corollary 2 in \cite{GKT1}).
The Lax pair formulation implies that the spectrum of $L_u$ is preserved by \eqref{BO}.
For any $u \in H^s_{r,0}$, the latter is discrete, bounded from below and consists of a sequence of simple real eigenvalues, which we
list in increasing order, $\lambda_0 < \lambda_1 < \cdots$  (cf. \cite{GK} ($s = 0$) and \cite{GKT1} ($-1/2 < s < 0$)).
They satisfy
\begin{equation}\label{def gaps}
\gamma_n:= \lambda_n - \lambda_{n-1} - 1 \ge 0 \, , \qquad \forall \ n \ge 1\, ,
\end{equation}
where $\gamma_n$, referred to as the $n$th gap of the spectrum of $L_u$ (cf. \cite[Appendix C]{GK}),
turns out to be the action action variable $|\zeta_n|^2$, mentioned in Remark \ref{Birkhoff coordinates}.
The spectrum of $L_u$ is encoded by the generating function,
\begin{equation}\label{def generating function}
\mathcal H_\lambda(u) := \langle (L_u + \lambda)^{-1} 1 | \, 1 \rangle .
%\lim_{\lambda \to \infty} \mathcal H_\lambda(u) = 0 \, .
\end{equation}
This function is at the heart of the construction of the map $\Phi$. 
It admits an expansion at $\lambda = \infty$, whose coefficients constitute the BO-hierarchy,
mentioned in Remark \ref{remark 3}. In Appendix \ref{generating function} we show that
$\mathcal H_\lambda(u)$ can be viewed as the relative determinant of $L_u + \lambda + 1$
with respect to $L_u + \lambda$.

\medskip
\noindent
{\em Acknowledgements.} We are grateful to E. Fouvry for drawing our attention to the reference \cite{W}.

\section{Action to frequency map}\label{action to frequency}
The aim of this section is to study the action to frequency map, mentioned in Section \ref{introduction}.
We restrict ourselves to potentials $u \in H^0_{r, 0}$.

Recall that for any $n \ge 1$, the actions $|\zeta_n(u)|^2$ of \eqref{BO},
associated to a potential $u \in L^2_{r,0} \equiv H^0_{r,0} $, coincide with the gap lengths $\gamma_n \equiv \gamma_n(u)$, 
$n \ge 1$, defined in \eqref{def gaps}.
By Theorem \ref{nonlinear FT}, the actions $\gamma(u):= (\gamma_n(u))_{n \ge 1}$ 
fill out the positive quadrant $\ell_{\ge 0}^{1,1}$ of the $\ell^1$-sequence space $\ell^{1,1} \equiv \ell^{1,1}(\N, \R)$,
$$
\ell_{\ge 0}^{1,1} :=\big\{ (x_n)_{n \ge 1} \in \ell^1(\N, \R) \,\big|\, 
x_n \ge 0 \  \forall \, n \ge 1\, , \sum_{n \ge 1} n x_n < \infty\big\} \, . 
$$
%By the trace formula in \cite[Proposition 3.1]{GK},
%$\|u\|_0^2  = 2\sum_{k\ge 1} k \gamma_k(u) $.
The frequencies $\omega_n \equiv \omega_n(\gamma)$, $n \ge 1$, 
(cf. \eqref{formula frequencies intro}), when viewed as
functions on the space of actions $\ell^{1,1}_{\ge 0}$,
can then be conveniently written as
\begin{equation}\label{formula frequencies}
\omega_n(\gamma) = n^2 - 2\check \omega_n(\gamma)\, , \qquad 
\check \omega_n(\gamma):=  \sum_{k\ge 1} k \gamma_k  - \sum_{k > n} (k-n) \gamma_k \, .
\end{equation}
Note that $\check \omega_n \equiv \check \omega_n(\gamma)$, $n \ge 1$, satisfy
\begin{equation}\label{property 1}
\lim_{n \to \infty} \check \omega_n = \sum_{k \ge 1} k \gamma_k\, , \qquad 
\check \omega_n - \check \omega_{n-1} = \sum_{k \ge n} \gamma_k \ge 0 \, , \ \  \forall n \ge 1 \, ,
\end{equation}
(with $ \check \omega_0(\gamma):= 0$) and
\begin{equation}\label{property 2}
(\check \omega_n - \check \omega_{n-1}) - (\check \omega_{n+1} - \check \omega_{n}) = \gamma_n \ge 0 \,, \qquad \forall \, n \ge 1\, .
\end{equation}
To describe the range of the map $\gamma \mapsto (\check \omega_n(\gamma))_{n \ge 1}$, we introduce the Banach space 
$\mathfrak c$, defined as the $\R$-vector space of real valued, convergent sequences ${\rm y} := (y_n)_{n \ge 1}$, endowed with 
the $\sup$-norm $\|{\rm y}\| := \sup_{n \ge 1} |y_n|$. For the sequel, it is convenient to set for any ${\rm y} = (y_n)_{n \ge 1}$ in 
$\mathfrak c$,
$$
y_0 := 0\, , \qquad y_\infty := \lim_{n \to \infty} y_n \, .
$$
Denote by $\mathfrak c_\uparrow$ the subset of $\mathfrak c$ of sequences ${\rm y} = (y_n)_{n \ge 1}$ satisfying
$$
0 \le  y_n \le y_{ n+1}\, , \quad (y_n - y_{n-1}) - (y_{n+1} - y_n) \ge 0\, , \qquad \forall \, n \ge 1\, .
$$
%Expressed in words, $\mathfrak c_\uparrow$ consists of the sequences ${\rm y}$ in $\mathfrak c$ which are positive, increasing, but have decreasing increments.
By \eqref{property 1}--\eqref{property 2}, for any $\gamma \in \ell^{1,1}_{\ge 0}$, 
the sequence $(\check \omega_n(\gamma))_{n \ge 1}$  is in $\mathfrak c_\uparrow$.
The {\em normalized action to frequency map} is defined as
$$
\check \Omega : \ell^{1,1}_{\ge 0} \to \mathfrak c_\uparrow, \, \gamma \mapsto (\check \omega_n(\gamma))_{n \ge 1} \, .  
$$
\begin{proposition}\label{check Omega homeo}
The map $\check \Omega : \ell^{1,1}_{\ge 0} \to \mathfrak c_\uparrow$ is a homeomorphism.
\end{proposition}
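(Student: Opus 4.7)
The strategy is to exhibit an explicit inverse by reading off equations \eqref{property 1} and \eqref{property 2}, which show that the map $\check\Omega$ is nothing but (minus) a discrete Laplacian in the index $n$. More precisely, relation \eqref{property 2} tells us that for any $\gamma \in \ell^{1,1}_{\ge 0}$ with $\mathrm y := \check\Omega(\gamma)$, one has $\gamma_n = 2y_n - y_{n-1} - y_{n+1}$ (with the convention $y_0 = 0$). This dictates the candidate inverse
$$
\Psi : \mathfrak c_\uparrow \to \ell^{1,1}_{\ge 0},\qquad
\mathrm y = (y_n)_{n\ge 1} \longmapsto \bigl(2y_n - y_{n-1} - y_{n+1}\bigr)_{n\ge 1}.
$$
Since both $\check\Omega$ (from its definition \eqref{formula frequencies}) and $\Psi$ (from its definition by second differences) are restrictions to cones of bounded linear maps between the ambient Banach spaces $\ell^{1,1}$ and $\mathfrak c$, continuity will be automatic once $\Psi$ is shown to be well-defined and to invert $\check\Omega$.

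The first step is to check that $\Psi$ takes values in $\ell^{1,1}_{\ge 0}$. Given $\mathrm y \in \mathfrak c_\uparrow$, set $d_n := y_n - y_{n-1}$. By the two defining inequalities of $\mathfrak c_\uparrow$, the sequence $(d_n)_{n\ge 1}$ is non-negative and non-increasing, and $\Psi(\mathrm y)_n = d_n - d_{n+1} \ge 0$. Telescoping gives $\sum_{n=1}^{N} d_n = y_N \to y_\infty$, so $(d_n)$ is summable; combined with monotonicity this forces $N d_{N+1} \to 0$. An Abel summation then yields
$$
\sum_{n=1}^{N} n(d_n - d_{n+1}) = \sum_{n=1}^{N} d_n - N d_{N+1} \ \build\longrightarrow_{N\to\infty}^{} \ y_\infty < \infty,
$$
so $\Psi(\mathrm y)\in \ell^{1,1}_{\ge 0}$ with $\|\Psi(\mathrm y)\|_{1,1} = d_1 + y_\infty \le 2\|\mathrm y\|$.

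The second step is to verify $\Psi\circ\check\Omega = \mathrm{id}$ and $\check\Omega\circ\Psi = \mathrm{id}$. The first identity is precisely \eqref{property 2}. For the second, starting from $\mathrm y \in \mathfrak c_\uparrow$ and setting $\gamma = \Psi(\mathrm y)$, the same Abel summation truncated at index $n$ gives
$$
\sum_{k=1}^{n} k\gamma_k + n\sum_{k>n} \gamma_k \;=\; \sum_{k=1}^n d_k - n d_{n+1} + n d_{n+1} \;=\; y_n,
$$
which, by the rewriting $\check\omega_n(\gamma) = \sum_{k=1}^n k\gamma_k + n\sum_{k>n}\gamma_k$ of \eqref{formula frequencies}, is exactly $\check\Omega(\gamma)_n$.

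Finally, continuity of $\check\Omega$ follows from the bound $|\check\omega_n(\gamma)| \le \sum_k k\gamma_k \le \|\gamma\|_{1,1}$, and continuity of $\Psi$ from the bound in the first step. There is no real obstacle beyond the minor point of justifying $Nd_{N+1}\to 0$; everything else is a bookkeeping consequence of the fact that the identities \eqref{property 1}--\eqref{property 2} already encode both the map and its inverse.
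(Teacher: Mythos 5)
Your construction of the inverse via second differences, the verification that it lands in $\ell^{1,1}_{\ge 0}$, the two composition identities, and the continuity of $\check\Omega$ itself all match the paper's argument and are fine. The gap is in the continuity of $\check\Omega^{-1}$, which is the one genuinely delicate point of the proposition, and which you dismiss as ``automatic''. Your justification is that $\Psi$ is the restriction to a cone of a bounded linear map $\mathfrak c\to\ell^{1,1}$. That is false: the second-difference map does not even send $\mathfrak c$ into $\ell^{1,1}$ (take $y_n=\sum_{k=1}^n(-1)^k k^{-3/2}$, whose second differences $\gamma_n\sim 2(-1)^n n^{-3/2}$ satisfy $\sum_n n|\gamma_n|=\infty$). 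The estimate $\|\Psi({\rm y})\|_{\ell^{1,1}}\le 2\|{\rm y}\|$ that you derive uses the sign condition $\gamma_n\ge 0$ in an essential way, so it holds only \emph{on} the cone $\mathfrak c_\uparrow$; but continuity at ${\rm y}$ requires controlling $\Psi({\rm y}^{(k)}-{\rm y})$, and the difference of two elements of $\mathfrak c_\uparrow$ is in general not in $\mathfrak c_\uparrow$, so the bound does not apply to it.

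This is not a repairable slip within your framework: any argument of this type would produce a Lipschitz (or at least uniform-continuity) bound for $\check\Omega^{-1}$ on $\mathfrak c_\uparrow$, and no such bound exists. Indeed, let $\gamma$ be supported at the single index $N$ with $\gamma_N=1/N$ and $\gamma'$ supported at $N+1$ with $\gamma'_{N+1}=1/(N+1)$. Then $\check\Omega(\gamma)_n=\min(n,N)/N$ and $\check\Omega(\gamma')_n=\min(n,N+1)/(N+1)$ differ by at most $1/N$ in sup norm, while $\|\gamma-\gamma'\|_{\ell^{1,1}}=2$. So $\check\Omega^{-1}$ is continuous but \emph{not} uniformly continuous, and its continuity has to be proved pointwise. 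The paper does this by combining three facts for a fixed limit ${\rm y}$ and a sequence ${\rm y}^{(k)}\to{\rm y}$ in $\mathfrak c_\uparrow$: each component $\gamma_n^{(k)}\to\gamma_n$ (from the second-difference formula); the nonnegative totals converge, $\sum_n n\gamma_n^{(k)}=y_\infty^{(k)}\to y_\infty=\sum_n n\gamma_n$; and hence, by a Scheff\'e-type tail estimate (choose $N$ with $\sum_{n\ge N}n\gamma_n<\e/4$ and use convergence of the tails), $\sum_n n|\gamma_n^{(k)}-\gamma_n|\to 0$. You need to supply an argument of this kind to close the proof.
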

 
\begin{remark}
To the best of our knowledge, comparable results for integrable PDEs such as the KdV equation or the NLS equation are not known.
For partial results in this direction for the KdV equation, we refer to \cite{KMMT} and references therein.
\end{remark}

\begin{proof}
By \eqref{property 2}, $\check \Omega$ is one-to-one. To see that $\check \Omega$ is onto, consider ${\rm y} = (y_n)_{n \ge 1}$ in 
$\mathfrak c_\uparrow$.
Let
\begin{equation}\label{formula gamma_n}
\gamma_n: = (y_n - y_{n-1}) - (y_{n+1} - y_{n})\, , \qquad \forall \, n \ge 1 \, .
\end{equation}
Then by the definition of $\mathfrak c_\uparrow$, $\gamma_n \ge 0$ for any $n \ge 1$. By telescoping, one has for any $ n \ge 1$,
$$
y_n - y_{n-1} =
% \lim_{N \to \infty} \sum_{k=n}^N \big( (y_k - y_{k-1}) -  (y_{k+1} - y_{k}) \big) = 
\lim_{N \to \infty} \sum_{k=n}^N  \gamma_k = \sum_{k \ge n} \gamma_k \, .
$$
% In particular,  since $y_0 = 0$, one has $y_\infty - (y_\infty - y_n)$ and using that, again by telescoping, 
and for any $n \ge 0$,
$$
y_\infty - y_n = \sum_{k \ge n} (y_{k+1} - y_k) = \sum_{k \ge n} \sum_{j > k} \gamma_j 
= \sum_{j > n} \gamma_j (j - n) \, ,
$$
or $y_n  = y_{\infty} -  \sum_{j > n} \gamma_j (j - n)$.
In particular, for $n=0$,
$ y_\infty = \sum_{k \ge 1} \ k \gamma_k$.
Hence $\gamma:= (\gamma_n)_{n \ge 1} \in \ell^{1,1}_{\ge 0}$ and 
$y_n = \sum_{k \ge 1} k \gamma_k - \sum_{k > n} (k - n) \gamma_k$ for any $n \ge 1$. 
We thus have proved that  $\check \Omega(\gamma) = {\rm y}$. 
 
Finally, note that $\check \Omega$  is the restriction of a linear map $\ell^{1,1} \to \mathfrak c$, whose norm is bounded by $2$. 
Indeed, since for any $(x_n)_{n \ge 1} \in \ell^{1,1}$ and any $n \ge 1$,
$$
\sum_{k\ge 1} k x_k  - \sum_{k > n} (k-n) x_k = \sum_{k =1}^n k x_k + n\sum_{k > n} x_k\, ,
$$
one has
\begin{equation}\label{eq:Om-bounded}
\big|\sum_{k\ge 1} k x_k  - \sum_{k > n} (k-n) x_k\big| \le \sum_{k =1}^n k |x_k| + n\sum_{k > n} |x_k|
\le 2 \sum_{k \ge 1} k |x_k|  \, .
\end{equation}
This implies that $\check \Omega: \ell^{1,1}_{\ge 0} \to \mathfrak c_\uparrow$ is continuous.
Going through the proof of the ontoness of 
$\check \Omega: \ell^{1,1}_{\ge 0} \to \mathfrak c_\uparrow$, one sees that 
$\check \Omega^{-1}: \mathfrak c_\uparrow \to \ell^{1,1}_{\ge 0} $
is continuous as well. Indeed, assume that $({\rm y}^{(k)})_{k \ge 1}$
is a sequence in $\mathfrak c_\uparrow$, converging
to ${\rm y} = (y_n)_{n \ge 1} \in \mathfrak c_\uparrow$. 
By \eqref{formula gamma_n} it follows that
for any $n \ge 1$, the nth component $\gamma_n^{(k)}$ of 
$\gamma^{(k)} := \check \Omega^{-1}({\rm y}^{(k)})$ converges 
to the nth component $\gamma_n$ of 
$\gamma :=\check \Omega^{-1}({\rm y})$ and that
$\sum_{n \ge 1} n \gamma_n^{(k)} = y_\infty^{(k)}$ converges
to  $\sum_{n \ge 1} n \gamma_n = y_\infty$. One then infers that
for any $N \ge 1$, $\sum_{n \ge N} n \gamma_n^{(k)}$ converges
to  $\sum_{n \ge N} n \gamma_n$. Given any $\e >0$, choose 
$N \ge 1$ so that $\sum_{n \ge N} n \gamma_n < \e/4$
and $k_\e \ge 1$ so that for any $k \ge k_\e$,
$\big|\sum_{n \ge N} n \gamma_n^{(k)} - \sum_{n \ge N} n \gamma_n\big| < \e /4$.
It then follows that $\big|\sum_{n \ge N} n \gamma_n^{(k)}\big| < \e/2$ for any $k \ge k_\e$.
From
$$
\sum_{n \ge 1} n  |\gamma_n^{(k)} - \gamma_n| \le 
\sum_{n \ge N} n \gamma_n^{(k)} + \sum_{n \ge N} n \gamma_n
 + \sum_{n < N} n  |\gamma_n^{(k)} - \gamma_n| ,
$$
one then concludes that $\gamma^{(k)}$ converges to $\gamma$ in $\ell^{1,1}$. 
\end{proof}
 
\begin{remark}\label{lacunary sequences of actions}
A result similar to the one of Proposition \ref{check Omega homeo} can be derived for
the restriction $\check \Omega_J$ of $\check \Omega $ to the subset $\ell^{1,1}_{\ge 0, J}$ of $\ell^{1,1}_{\ge 0}$, 
$$
\check \Omega_J : \ell^{1,1}_{\ge 0, J} \to \mathfrak c_{\uparrow, J} \, , 
\, (\gamma_{n_p})_{p} \mapsto (\check \omega_{n_p})_{p} \, .
$$
Here $J:= \{ n_1 < n_2 < \cdots < n_* \}$ is a subset of $\N$ with either $n_* = \infty$ or $n_* = N+1$ for some 
integer $N \ge 0$. The sets $\ell^{1,1}_{\ge 0, J}$ and $ \mathfrak c_{\uparrow, J}$ are defined 
in these two cases as follows.\\
(i) {\em Case $J$ infinite.} In this case, the subset $J$ is of the form
$J := \{n_p \,|\, p \ge 1 \}$, $\ell^{1,1}_{\ge 0, J}$ is the subset
$$
\ell^{1,1}_{\ge 0, J} :=\big\{ (\gamma_n)_{n \ge 1} \in \ell_{\ge 0}^{1,1} \,\big| \, 
\gamma_n > 0 \ \forall \, n \in J, \gamma_n = 0 \ \forall \, n \notin J\big\}  \, ,
$$
and $\mathfrak c_{\uparrow, J}$ the set of strictly increasing sequences ${\rm y}_J:=  (y_{n_p})_{p \ge 1} $ 
of positive numbers, satisfying
$$
y_{\infty} := \lim_{p \to \infty} y_{n_p} < \infty \, , \qquad 
\frac{y_{n_p} - y_{n_{p-1}}}{n_p - n_{p-1}}
- \frac{y_{n_{p+1}} - y_{n_p}}{n_{p+1} - n_{p}} > 0\, , \quad \forall \, p \ge 1\, ,
$$
where we set $n_0 = 0$ and $y_0 = 0$.
Note that for any $(\gamma_n)_{n \ge 1}$ in $\ell^{1,1}_{\ge 0, J} $,
$$
%\Delta_J \check \omega_{n_p} : = 
\check \omega_{n_p} - \check \omega_{n_{p-1}} = (n_{p} - n_{p-1} ) \sum_{q \ge p} \gamma_{n_q} \, , 
\qquad \forall \, p \ge 1\, ,
$$
where we recall that $ \check \omega_{0} = 0$. For any $p \ge 1$, one then has
$$
\frac{\check \omega_{n_p} - \check \omega_{n_{p-1}}}{n_{p} - n_{p-1} }  - 
\frac{\check \omega_{n_{p+1}} - \check \omega_{n_{p}}}{n_{p+1} - n_p }  = \gamma_{n_p} \, .
$$
(ii) {\em Case $J$ finite.} In this case, the subset $J$ is of the form
$J := \{n_p \,|\, 1 \le  p \le N \}$, 
$\ell^{1,1}_{\ge 0, J}$ denotes the subset
$$
\ell^{1,1}_{\ge 0, J} := 
\big\{ (\gamma_n)_{n \ge 1} \in \ell_{\ge 0}^{1,1} \,\big| \,  
\gamma_n > 0 \ \forall \, n \in J; \, \gamma_n = 0 \ \forall \, n \notin J\big\} ,
$$
and $\mathfrak c_{\uparrow, J}$ the set of strictly increasing finite sequences ${\rm y}_J:=  (y_{n_p})_{1 \le p  \le N} $ 
of real numbers, satisfying
$$
\frac{y_{n_p} - y_{n_{p-1}}}{n_p - n_{p-1}}
- \frac{y_{n_{p+1}} - y_{n_p}}{n_{p+1} - n_{p}} \ge 0\, , \quad \forall \, 1 \le p \le N\, ,
$$
where we set $n_0 = 0$, $y_0 = 0$ and $n_{N+1} = n_N + 1$, $y_{n_{N+1}} = y_{n_N}$.
Note that for any $(\gamma_n)_{n \ge 1}$ in $\ell^{1,1}_{\ge 0, J} $,
$$
%\Delta_J \check \omega_{n_p} : = 
\check \omega_{n_p} - \check \omega_{n_{p-1}} = (n_{p} - n_{p-1} ) \sum_{q \ge p} \gamma_{n_q} \, , 
\qquad \forall \, 1 \le p \le N\, .
$$
In this case, $\check \omega_n = \check \omega_{n_N}$ for any $n \ge n_N$
and for any $1 \le p \le N$, 
$$
\frac{\check \omega_{n_p} - \check \omega_{n_{p-1}}}{n_{p} - n_{p-1} }  - 
\frac{\check \omega_{n_{p+1}} - \check \omega_{n_{p}}}{n_{p+1} - n_p }  = \gamma_{n_p} \, .
$$
\end{remark}
 
\smallskip
 
It is convenient to extend $\check\Omega$ to a linear map $\Omega : \ell^{1,1}\to\mathfrak{c}$. 
This extension, is given by
$$
\Omega[{\rm x}] = \Big( \sum_{k= 1}^n k x_k  + n \sum_{k > n}  x_k \Big)_{n \ge 1} \, , \qquad \forall \,  
{\rm x} = (x_n)_{n \ge 1} \in \ell^{1,1} \, .
$$
Then $\Omega$ is a bounded by \eqref{eq:Om-bounded}. 
Denote by $Q$ the quadratic form, induced by $\Omega$. For any ${\rm x} \in \ell^{1,1}$,
$Q({\rm x})$ is given by
$$
Q({\rm x}) = \langle {\rm x} | \Omega({\rm x})  \rangle 
= \sum_{n \ge 1} x_n \sum_{k= 1}^n k x_k  +   \sum_{n \ge 1} n x_n  \sum_{k > n}  x_k.
$$
Since
$\sum_{n \ge 1} x_n \sum_{k= 1}^n k x_k = \sum_{k \ge 1} k x_k  \sum_{n \ge k} x_n$,
$Q({\rm x})$ can be written as
\begin{equation}\label{quadratic form 1}
Q({\rm x}) =  \sum_{n \ge 1} n x_n^2  + 2 \sum_{n \ge 1} n x_n  \sum_{k > n}  x_k \, .
\end{equation}
As a quadratic form, $Q$ extends to $\ell^{1, 1/2}\equiv \ell^{1, 1/2}(\N, \R)$ and
$$
\begin{aligned}
|Q( {\rm x})|  & \le  \sum_{n \ge 1}\big(\sqrt{n} |x_n|\big)^2
+2\sum_{n \ge 1} \sqrt{n} |x_n|  \sum_{k > n} \sqrt{k} |x_k| \\
&  \le 2\sum_{n \ge 1} \sqrt{n} |x_n|  \sum_{k \ge n} \sqrt{k} |x_k|   \le 2  \| {\rm x} \|^2_{\ell^{1, 1/2}} \, .
\end{aligned}
$$
(We mention that the quadrant $\ell^{1, 1/2}_{\ge 0}$, filled out by the actions,  corresponds to the phase space of 
potentials $H^{-1/4}_{r, 0}$, for which the map $\Phi$ is well defined by Theorem \ref{nonlinear FT}.)
By \eqref{quadratic form 1} one has 
$Q( {\rm x}) =  \sum_{n \ge 1} n\big(x_n^2 + 2 x_n \sum_{k > n}  x_k\big)$.
Hence by completing squares one obtains 
$$
Q( {\rm x}) =  \sum_{n \ge 1} n\Big(x_n + \sum_{k > n}  x_k\Big)^2  -  \sum_{n \ge 1} n\Big( \sum_{k > n}  x_k\Big)^2
$$
or $Q( {\rm x}) =  \sum_{n \ge 1} n (s_n^2 - s_{n+1}^2 )$ where $s_n :=   \sum_{k \ge n}  x_k$. It implies
% Writing $- n s_{n+1}^2 = - ((n+1) -1) s_{n+1}^2$ one gets
$$
Q( {\rm x}) = s_1^2 + \sum_{n \ge 2} s_n^2 =  \sum_{n \ge 1} s_n^2 \, .
$$
In particular, one sees that $Q$ is a positive semidefinite quadratic form on $\ell^{1, 1/2}$ and that
$-Q$, when restricted to $\ell^{1,1}_{\ge 0}$, coincides with the quadratic part the Hamiltonian $\mathcal H$ of the BO equation,
when expressed in the action variables $\gamma=(\gamma_n)_{n \ge 1}$ (cf. \cite[Proposition 8.1]{GK}),
\begin{equation}\label{Hamiltonian convex}
\mathcal H = \sum_{n \ge 1} n^2 \gamma_n - \sum_{n \ge 1}\Big(\sum_{k \ge n} \gamma_n\Big)^2 \, .
\end{equation}
Since $-2Q$ is the Hessian of  $\mathcal H$, the latter can thus be viewed as a concave function.
In summary, $Q$ has the following properties.
  
\begin{proposition}\label{results on Q}
The quadratic form $Q$ is well defined on $\ell^{1, 1/2}$. It is positive semidefinite
and satisfies
$$
|Q( {\rm x})| \le  2  \| {\rm x} \|^2_{\ell^{1, 1/2}} \, , \qquad \forall \, {\rm x} \in \ell^{1, 1/2} \, .
$$
Furthermore, 
\begin{equation}\label{lower bound Q}
\inf\big\{ Q({\rm x}) \,\big|\, \| {\rm x} \|_{\ell^{1, 1/2}} = 1\big\} = 0\, ,
\end{equation}
hence $Q$ is {\em not} positive definite. 
\end{proposition}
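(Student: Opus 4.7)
The plan is to harvest the three claims from the computations already carried out in the paragraphs immediately preceding the statement. The upper bound $|Q(\mathrm{x})| \le 2\|\mathrm{x}\|^2_{\ell^{1,1/2}}$, together with the fact that $Q$ makes sense on $\ell^{1,1/2}$, follows from the two-line Cauchy--Schwarz estimate displayed in the text (controlling each of the two summands in formula \eqref{quadratic form 1} by $\sum_{n}\sqrt{n}|x_n| \cdot \sum_{k\ge n}\sqrt{k}|x_k|$ and then by $\|\mathrm{x}\|_{\ell^{1,1/2}}^2$). The positive semidefiniteness is an immediate consequence of the completing-the-squares identity
\begin{equation*}
Q(\mathrm{x}) = \sum_{n \ge 1} s_n^2, \qquad s_n := \sum_{k \ge n} x_k,
\end{equation*}
established just above the statement. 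So no new work is required for the first two assertions.

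The one genuinely new point is \eqref{lower bound Q}: I need to exhibit a sequence of unit vectors in $\ell^{1,1/2}$ on which $Q$ tends to zero. The identity $Q(\mathrm{x}) = \sum_{n\ge 1} s_n^2$ suggests making the tail sums $s_n$ vanish except on a very short window. The simplest way is to place two opposite weights on consecutive indices far out: for each $N \ge 1$, set
\begin{equation*}
\mathrm{x}^{(N)} := \frac{1}{\sqrt{N}+\sqrt{N+1}}\,\big(e_N - e_{N+1}\big),
\end{equation*}
where $e_n$ denotes the $n$th standard basis vector of $\ell^{1,1/2}$.

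Then $\|\mathrm{x}^{(N)}\|_{\ell^{1,1/2}} = 1$ by construction, and the tail sums $s_n^{(N)} = \sum_{k\ge n} x_k^{(N)}$ vanish for all $n \ne N+1$ thanks to the cancellation $x_N^{(N)} + x_{N+1}^{(N)} = 0$, while $|s_{N+1}^{(N)}| = 1/(\sqrt{N}+\sqrt{N+1})$. Hence
\begin{equation*}
Q(\mathrm{x}^{(N)}) = \frac{1}{(\sqrt{N}+\sqrt{N+1})^2} \xrightarrow[N\to\infty]{} 0,
\end{equation*}
which, combined with $Q \ge 0$, proves \eqref{lower bound Q} and shows $Q$ is not positive definite.

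There is no serious obstacle here: the only thing to be careful about is that the norm of $\mathrm{x}^{(N)}$ is measured in $\ell^{1,1/2}$ (which explains the normalizing factor $\sqrt{N}+\sqrt{N+1}$ rather than $\sqrt{2}$), and that the cancellation in $x_N + x_{N+1}$ propagates to all $s_n$ with $n \le N$, leaving only one surviving term in $\sum_n s_n^2$. Assembling the three pieces yields the proposition.
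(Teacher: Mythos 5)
Your proof is correct. The first two assertions are, as you say, already contained in the estimate displayed before the proposition and in the completing-the-squares identity $Q({\rm x})=\sum_{n\ge 1}s_n^2$, and the paper handles them the same way, reducing the proof to \eqref{lower bound Q}. For that claim you follow the same overall route as the paper --- an explicit family of unit vectors whose tail sums $s_n$ are made small by sign cancellation --- but with a different witness. The paper takes the full alternating sequence $x_n^{(N)}=(-1)^{n+1}/a_N$ supported on $\{1,\dots,N\}$ with $a_N=\sum_{n=1}^N\sqrt{n}$, so that every $|s_n|\le 1/a_N$ and hence $Q({\rm x}_N)\le N/a_N^2\le 9/(4N^2)$; your two-spike vector $(e_N-e_{N+1})/(\sqrt{N}+\sqrt{N+1})$ instead annihilates every tail sum except $s_{N+1}$ and gives $Q=(\sqrt{N}+\sqrt{N+1})^{-2}\sim 1/(4N)$. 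Your computation checks out (the vector is indeed of unit $\ell^{1,1/2}$-norm, and the cancellation argument for the $s_n$ is valid since the vector is finitely supported, so the identity $Q=\sum_n s_n^2$ applies without any convergence issue). Your choice is slightly cleaner in that only one term survives in the sum; the paper's family decays faster in $N$, which is immaterial here since only $\inf Q=0$ is needed.
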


\begin{proof}
It remains to prove \eqref{lower bound Q}. For an arbitary integer $N \ge 2$, consider the sequence
${\rm x}_N := (x_n^{(N)})_{n \ge 1} \in \ell^{1, 1/2}$ with $x_n^{(N)} = 0$ for any $n > N$ and
$$
x_n^{(N)} = \frac{1}{a_N}(-1)^{n+1} \quad \forall \,  1 \le n \le N \, ,  \qquad  a_N := \sum_{n=1}^N \sqrt{n} \, .
$$
Then $\| {\rm x}_N\|_{\ell^{1, 1/2}} = 1$ and $(s^{(N)}_n)^2 \le 1/a_N^2$, implying that
$Q({\rm x}_N) \le \frac{N}{a_N^2}$. Since
$$
a_N \ge \int_0^N \sqrt{x} dx = \frac23 N^{3/2}, \qquad
a_N^2 \ge \frac49 N^3\, ,
$$
it follows that $ Q({\rm x}_N) \le \frac{9}{4N^2}$. As $N \ge 2$ is arbitrary, \eqref{lower bound Q} holds.
\end{proof}
 \medskip

%%%%%%%%%%%%%%%%%%%%%%%%%%%%%%%%%%%%%%%%%%%%%%%%%%%%%%%%%%%
%%%%%%%%%%%%%%%%%%%%%%%%%%%%%%%%%%%%%%%%%%%%%%%%%%%%%%%%%%

\section{Applications}\label{application}

As an illustration of our analysis of the action to frequency map we apply our results to construct  families of periodic solutions
of the BO equation, which are {\em not} traveling waves, and families of quasiperiodic solutions, which are {\em not} finite gap solutions.

\smallskip
\noindent
{\bf Periodic solutions.} Our first result addresses the question, whether  there are periodic in time solutions of \eqref{BO}, which are {\em not
finite gap solutions}.
  
\begin{theorem}\label{periodic solutions which are not finite gap}
(i) For  $T > 0$ with $T/\pi $ rational, any $T$--periodic solution in $L^2_{r,0}$ of \eqref{BO} is a finite gap solution.\\
(ii) For any positive irrational number $b$, there exists a strictly increasing sequence $(n_p)_{p \ge 1}$ in $\N$ and a sequence of actions
$\gamma = (\gamma_n)_{n \ge1}$ in $\ell^{1,1}_{\ge 0, J}$, $J:= \{ n_p \,:\, p \in \N \}$, satisfying
$$
\sum_{p \ge 1} n_p^3 \gamma_{n_p} = \infty \, , \qquad  
\omega_{n_p}(\gamma) \in   b \Z \, , \quad \forall \, p \ge 1\,,
$$
where $\omega_n(\gamma)$ is given by \eqref{formula frequencies}.
As a consequence, any potential $u_0$ in the torus (cf. \cite[Section 3]{GK})
\begin{equation}\label{isospectral set}
\text{Iso}_\gamma :=\big\{ u \in L^2_{r, 0} \,\big| \,  \gamma_n(u) = \gamma_n \  \forall \, n \ge 1\big\}
\end{equation}
is {\em not} in $H^1_{r,0}$ and the solution $u(t)$ of the BO equation with $u(0)= u_0$
(cf. Theorem \ref{wellposedness}) is periodic in time with period $T= 2\pi / b$.
Therefore, $\text{Iso}_\gamma$ is entirely filled up with $T$-periodic solutions.
\end{theorem}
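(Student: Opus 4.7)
For part (i), I would use (NF2) of Theorem \ref{nonlinear FT}: if $u$ is $T$-periodic in $L^2_{r,0}$, then every index $n$ in the support $J := \{n : \gamma_n(u) > 0\}$ satisfies $\omega_n(u) \in b\Z$, where $b := 2\pi/T$. The hypothesis $T/\pi \in \Q$ makes $b$ rational, say $b = p/q$ in lowest terms, so $\Z + b\Z = (1/q)\Z$ is a discrete subgroup of $\R$. The key identity, valid for any $n_i, n_j \in J$, is
\[
2(\check\omega_{n_i} - \check\omega_{n_j}) = (n_i^2 - n_j^2) - (\omega_{n_i} - \omega_{n_j}) \in \Z + b\Z.
\]
Since $\check\omega_n$ converges as $n \to \infty$ by \eqref{property 1}, the left-hand side tends to $0$; discreteness of $(1/q)\Z$ then forces $\check\omega_{n_i} = \check\omega_{n_j}$ for $i,j$ large enough. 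Combined with the monotonicity $\check\omega_n - \check\omega_{n-1} = \sum_{k\ge n}\gamma_k$, this makes $\check\omega_n$ eventually constant, hence $\gamma_k = 0$ for all large $k$, contradicting the infinitude of $J$.

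For part (ii), the plan is to work entirely on the frequency side via the lacunary homeomorphism $\check\Omega_J$ of Remark \ref{lacunary sequences of actions}(i) and prescribe $\check\omega_{n_p}$ directly. The condition $\omega_{n_p} \in b\Z$ is equivalent to $\check\omega_{n_p} \in n_p^2/2 + (b/2)\Z$. I would fix a target limit $L > 0$ and set $\check\omega_{n_p} := L - \alpha_p$ with $\alpha_p := (b/2)\bigl(1 - \{(n_p^2 - 2L)/b\}\bigr) \in (0, b/2]$, which places $\check\omega_{n_p}$ on the required coset automatically (the corresponding integer is $k_p = \lceil(n_p^2 - 2L)/b\rceil$). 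Since $b$ is irrational, Weyl's equidistribution theorem applied to the quadratic sequence $n^2/b$ guarantees that for any $\delta > 0$ the set of $n$ with $\{(n^2 - 2L)/b\} > 1 - \delta$ has positive density, so arbitrarily large $n$ with arbitrarily small $\alpha_p$ are available. I would then choose $(n_p)_{p\ge 1}$ inductively, requiring at each step that (a) $\alpha_p < \alpha_{p-1}/2$ (so that $\check\omega_{n_p}$ strictly increases to the finite limit $L$), (b) $n_p \ge \sqrt{2p/\alpha_{p-1}}$ (to force the $H^1$-sum to diverge, see below), and (c) $n_p$ exceeds the finite threshold dictated by the one-step-look-ahead strict-concavity inequality at index $p-1$, namely $n_p - n_{p-1} > (n_{p-1}-n_{p-2})(\alpha_{p-1}-\alpha_p)/(\alpha_{p-2}-\alpha_{p-1})$. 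All three demands are simultaneously realizable because the Weyl set has positive density and $n_p$ is otherwise free to be as large as we wish.

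The lacunary inverse $\gamma := \check\Omega_J^{-1}(\check\omega) \in \ell^{1,1}_{\ge 0, J}$ then satisfies $\sum_p n_p \gamma_{n_p} = L < \infty$ automatically. Writing $\gamma_{n_p} = d_p - d_{p+1}$ with $d_p = (\alpha_{p-1}-\alpha_p)/(n_p - n_{p-1})$, the conditions $n_{p+1} \gg n_p$ and $\alpha_{p+1} < \alpha_p/2$ make $d_{p+1}$ negligible compared to $d_p \approx \alpha_{p-1}/n_p$, so $n_p^3\gamma_{n_p} \gtrsim n_p^2\alpha_{p-1} \ge 2p$ and $\sum_p n_p^3\gamma_{n_p} = \infty$. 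By the characterization $u \in H^s_{r,0} \Leftrightarrow \sum n^{2s+1}\gamma_n(u) < \infty$ that follows from (NF1) with $s = 1$, no $u_0 \in \text{Iso}_\gamma$ lies in $H^1_{r,0}$. Periodicity with period $T = 2\pi/b$ is then immediate from (NF2): for $n \in J$, $\omega_n T = 2\pi k_n \in 2\pi\Z$ gives $\zeta_n(\mathcal S(T)u_0) = \zeta_n(u_0)$, while for $n \notin J$ both sides vanish, so the injectivity of $\Phi$ yields $\mathcal S(T)u_0 = u_0$. The delicate part of the argument is the inductive coordination of the Diophantine constraint on $\check\omega_{n_p}$ with the convexity, monotonicity, and divergence requirements; the positive density provided by Weyl's theorem, together with the freedom to enlarge $n_p$ after $\alpha_{p-1}$ is set, is exactly what makes the induction close.
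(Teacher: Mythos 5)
Your proposal is correct and follows essentially the same route as the paper: part (i) rests on the discreteness of $\Z+b\Z$ for rational $b=2\pi/T$ together with the convergence of $\check\omega_n$, and part (ii) on Weyl equidistribution of $n^2/b$ modulo $1$ to place $\check\omega_{n_p}$ on the cosets $n_p^2/2+(b/2)\Z$ while forcing convergence to a finite limit $y_\infty=L$ along a sparse set $J$, then inverting via the lacunary map of Remark \ref{lacunary sequences of actions}(i). The only difference is cosmetic: the paper pins $2(y_\infty-\check\omega_{n_p})$ into the prescribed geometric window $[\e_0 4^{-p},2\e_0 4^{-p}]$ and writes the actions $\gamma_{n_p}$ out explicitly, whereas you run a greedy induction with $\alpha_p<\alpha_{p-1}/2$ and $n_p$ chosen large after the fact; both versions close.
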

 
\begin{proof}
 (i)
  Let $u(t)$ be the solution of \eqref{BO} with initial condition $u(0) = u_0 \in L^2_{r, 0}$.
By Theorem \ref{wellposedness}, $\Phi (u(t))=(\zeta_n(t))_{n\ge 1} \in h_+^{1/2}$ is given by
$$
\zeta_n(t)=\zeta_n(0){\rm e}^{it\omega_n}\, , \qquad  |\zeta_n(0)|^2 = \gamma_n \equiv \gamma_n(u_0), \qquad \forall \, n \ge 1\, ,
$$
%and $\omega_n=n^2-\| u\|_{L^2}^2+2\sum_{j>n}(j-n) \gamma_n(u_0)$.
Hence $u$ is $T$--periodic if and only if, for every $n \ge 1$ with $\zeta_n(0)\ne 0$, 
$$
\omega_n \in \omega \Z\ ,\qquad \omega :=\frac{2\pi}{T}\ .
$$
By assumption, $\omega$ is rational.  Choose $p, q \in \Z$ with  $q \ge 1$ so that 
$\omega = p/q$. Since $u_0$ is in $L^2_{r,0}$, $\gamma = (\gamma_n)_{n \ge 1} \in \ell^{1,1}_{\ge 0}$
and formula \eqref{formula frequencies} for the frequencies hold.
It then follows  that for any $n \ge 1$ with $\zeta_n(0)\ne 0$, 
$$
-2 \sum_{k\ge 1} k \gamma_k + 2\sum_{k>n}(k-n) \gamma_k \in \frac{1}{q}\,Z \ .
$$
Assume that there are {\em infinitely} many integers $n$ with $\zeta_n(0)\ne 0$.
Since $\sum_{k>n}(k-n) \gamma_k \to 0$ as $n \to \infty$, one then concludes that
$-2 \sum_{k\ge 1} k \gamma_k \in \frac 1q \Z$. Consequently, for infinitely many integers $n$, one has
$2\sum_{k>n}(j-n)|\zeta_k(0)|^2\in \frac 1q \Z$, which
contradicts that $\sum_{k>n}(k-n) \gamma_k$ converges to $0$. Hence there are only finitely 
many integers $n$ with $\zeta_n(0)\ne 0$, which implies that $u(t)$ is a  finite gap solution. \\
(ii) Our task is to find a strictly increasing sequence $(n_p)_{p \ge 1}$ of $\N$ 
and a sequence $\gamma = (\gamma_n)_{n \ge 1}$ 
in $\ell^{1,1}_{\ge 0, J}$, 
$J:= \{ n_p \,|\, p \in \N \}$, (cf. Remark \ref{lacunary sequences of actions}(i)), so that there exists
a sequence $(m_{p})_{p \ge 1}$ in $\Z$ with the property that 
\begin{equation}\label{identity for frequencies}
n_p^2 -2 \check \omega_{n_p} = m_p b\, , \qquad \forall \, p \ge 1 \, ,
\end{equation}
and 
%for any $p \ge 1$, 
\begin{equation}\label{identity y infty}
\check \omega_{n_p} =  y_\infty - \sum_{q > p} (n_q - n_p) \gamma_{n_q} \, ,  
\qquad  y_\infty =  \sum_{p\ge 1} n_p \gamma_{n_p} \, .
\end{equation}
To find such sequences, we use that by a result due to Weyl \cite{W}, 
the set $D_b:= \{ n^2 + k b \,|\, n \in \Z_{\ge 0}, \, k \in \Z \}$ is dense in $\R$. 
Given an arbitrary positive real number $y_\infty$, choose a sequence $(\e_p)_{p \ge 1}$ of the form
\begin{equation}\label{def varepsilon}
\e_p:= \e_0 4^{-p}\, , \qquad \forall \,  p \ge 1,
\end{equation}
where $\e_0>0$ is chosen so that
\begin{equation}\label{bound epsilon_0}
\e_0 < 4 y_\infty\ .
\end{equation}
For any $p \ge 1$, we then choose integers $n_p \ge 0$  and $k_p$ so that
\begin{equation}\label{choice rho}
\rho_{p} := 2 y_\infty -  n^2_p  + k_p b  \in [\e_p, 2\e_p]\ .
\end{equation}
By the definition of the sequence $(\e_p)_{p \ge 1}$, $(\rho_{p})_{p \ge 1}$ is a strictly decreasing sequence of positive numbers, 
converging to $0$ as $n \to \infty$,
$$
0 < \rho_{p+1}\leq 2\e_{p+1}=\e_p/2< \e_p \le \rho_p \, .
$$
By induction on $p$, it is possible to choose $n_p \ge 1$ for any $p \ge 1$, so that $n_1 \ge 1$ and $n_{p+1}\ge 2n_p$. 
(Indeed, for every integer $N$, the set 
$$\{ n^2+ kb \,|\,   0 \le n \le N, \ k\in \Z \}$$
is discrete, so what is left over after removing it from $D_b$ is still dense in $\R$.)
Thinking of  $\rho_{p}$ as $2y_\infty-2\check\omega_{n_p}$ and hence of $2y_\infty -  \rho_{p}$  as $2 \check \omega_{n_p}$,
we define  for any $p\ge 2$ (cf. Remark \ref{lacunary sequences of actions}(i)),
\begin{equation}\label{gamma n_p}
\gamma_{n_p}:= a_p - a_{p+1}\, , \qquad
a_p:= \frac{\rho_{p-1}-\rho_p}{2(n_p-n_{p-1})} > 0 \, .
\end{equation}
Using that $n_p \ge n_p-n_{p-1}$ and $n_{p+1}-n_p\ge n_p$, one sees that
\begin{equation}\label{gamma n_p'}
\frac{n_p}{n_p-n_{p-1}}(\rho_{p-1}-\rho_p) - \frac{n_p}{n_{p+1}-n_{p}}(\rho_p - \rho_{p+1}) 
\ge \rho_{p-1} - 2\rho_p + \rho_{p+1} \, .
\end{equation}
Since by \eqref{choice rho}, $\e_p \le \rho_p \le 2 \e_p$ and  by \eqref{def varepsilon}, $\e_{p-1} - 4 \e_p = 0 $ ,  
one gets from \eqref{gamma n_p} and \eqref{gamma n_p'},
$$
2 n_p\gamma_{n_p} \ge   \e_{p-1}-4\e_p+\e_{p+1} = \e_{p+1} .
$$
Consequently, $\gamma_{n_p}>0$ and, for some $c > 0$,
$$
n_p^3 2\gamma_{n_p} \ge  (2^{p-1} n_1)^2 2 n_p \gamma_{n_p} \ge  4^{p-1} n_1^2  \e_{p+1} 
= 4^{p-1} n_1^2 \frac{\e_0}{4^{p+1}} \ge c>0 \, ,
$$
implying that  $\sum n_p^3\gamma_{n_p} = \infty$. 
On the other hand by \eqref{gamma n_p},
$$
2n_p\gamma_{n_p}\le \frac{n_p}{n_{p} - n_{p-1}}( \rho_{p-1}-\rho_p) \le  \rho_{p-1}-\rho_p  \, , 
$$
hence by telescoping and by the bound \eqref{bound epsilon_0} of $\e_0$,
$$\sum_{p\ge 2}n_p\gamma_{n_{p}}\le \rho_1/2 \leq \e_1=\e_0/4 < y_\infty\ .$$
Now define $\gamma_{n_1}>0$ so that the second identity in \eqref{identity y infty} holds,
$$
\gamma_{n_1} := \frac{1}{n_1}\Big( y_\infty - \sum_{p \ge 2}n_p \gamma_{n_p}\Big) > 0 \, .
$$ 
It remains to check the  identities in \eqref{identity for frequencies}. 
Using the definition \eqref{gamma n_p} of $\gamma_{n_p}$, $p \ge 2$,
one verifies that for any $p \ge 1$, 
$\rho_p =2\sum_{q>p}(n_q-n_p)\gamma_{n_q}$ and thus, by the definition of $\rho_p$,
$$
{\check\omega}_p=n_p^2- 2y_\infty+2\sum_{q>p}(n_q-n_p)\gamma_{n_q}= k_p b \, , \qquad  \forall \, p\ge 1\,  .
$$
This completes the proof of item (ii).
\end{proof}

\begin{remark}\label{lacunary structure}
(i) It is possible to choose the sequence $(\e_p)_{p \ge 1}$, constructed in the proof of Theorem \ref{periodic solutions which are not finite gap}(ii), 
so that $u \notin H^s_{r, 0}$ for some $0 < s<1$.\\
(ii) The sequence $(n_p)_{p \ge 1}$, constructed in the proof of Theorem \ref{periodic solutions which are not finite gap}(ii),
needs to be sparse in the following sense: if $(n_p)_{p \ge 1}$ is a strictly increasing sequence in $\N$
and $\gamma = (\gamma_k)_{k \ge 1}$ a sequence of actions in $\ell^{1,1}_{\ge 0, J}$, $J:= \{ n_p \,|\, p \ge 1\}$, 
with the property that there exists an infinite set of integers $n$ in $J$ so that $n-1$ and $n+1$  are also contained in $J$,
then the frequencies  cannot satisfy \eqref{identity for frequencies}.
Indeed, the frequencies satisfy on $\ell^{1,1}_{\ge 0}$ the identities (with $\omega_0 = 0$)
$$\omega_{k+1}-2\omega_k+\omega_{k-1}=2+2\gamma_k\, , \qquad \forall \, k \ge 1.$$
Hence if $\omega_{n_p} \in b \Z$ for any $p \ge1 $, it then would follow that $2+2\gamma_n(u)\in b \Z$
for infinitely  many $n$ in $J$, implying that $2 \in b\Z$. This however contradicts the assumption of $b$ being irrational.
\end{remark}

The following result says that that there are many finite gap solutions of the BO equation which are periodic in time,
but not traveling waves.

\begin{proposition}\label{periodic finite gap solutions}
For any rational number of the form $ 1/a$, $a \in \N$, any $N\in \N$ , and any strictly increasing sequences $(n_p)_{1 \le p \le N}$,
$(k_p)_{1 \le p \le N}$ in $\N$ with 
$$
\frac{k_p - k_{p-1}}{n_p - n_{p-1}} -  \frac{k_{p+1} - k_{p}}{n_{p+1}- n_{p}} > 0\, , \qquad \forall \, 1 \le p \le N\, ,
$$
(where we set $n_0 := 0$, $n_{N+1}:= n_N +1$, $k_0:= 0$, and $k_{N+1} := k_N$), the following holds: the
sequence of actions, 
$\gamma = (\gamma_n)_{n \ge 1} \in \ell^{1,1}_{\ge 0, J}$, defined by
$$
\gamma_{n_p} := \frac{1}{a} \Big( \frac{k_p - k_{p-1}}{n_p - n_{p-1}} -  \frac{k_{p+1} - k_{p}}{n_{p+1}- n_{p}}\Big) \, , 
\qquad  \forall \, 1 \le p \le N\, ,
$$
and $J:= \{ n_p \,|\, 1 \le p \le N \}$ (cf. Remark \ref{lacunary sequences of actions}(ii)),
has frequencies $\omega_{n_p} \equiv \omega_{n_p} (\gamma)$, $1 \le p \le N$, given by
$$
\omega_{n_p}  = n_p^2 -2 \check \omega_{n_p} \in \frac{1}{a}\,Z \, , \qquad
\check \omega_{n_p} = \frac{1}{a}\,k_p  \, , \qquad  \forall \, 1 \le p \le N\, .
$$
As a consequence, any potential $u_0$ in the torus $\text{Iso}_\gamma$ (cf. \eqref{isospectral set})
% := \{ u \in L^2_{r, 0} \,|\,  \gamma_n(u) = \gamma_n \  \forall \, n \ge 1 \}
is a finite gap potential, the solution $u(t)$ of \eqref{BO} with $u(0) = u_0$ periodic in time with period 
$T= 2\pi a$, and hence $\text{Iso}_\gamma$ entirely filled with $T$-periodic solutions.
\end{proposition}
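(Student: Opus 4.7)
The plan is to reduce the statement to a direct application of Remark \ref{lacunary sequences of actions}(ii), which supplies the finite-dimensional inverse of the normalized action-to-frequency map on a lacunary support. First I would set $y_{n_p} := k_p/a$ for $1 \le p \le N$ (with $y_0 = 0$ corresponding to $k_0 = 0$, and $y_{n_{N+1}} = y_{n_N}$ corresponding to $k_{N+1} = k_N$) and check that the resulting finite sequence $\mathrm{y}_J := (y_{n_p})_{1 \le p \le N}$ lies in $\mathfrak c_{\uparrow, J}$. Strict monotonicity is immediate from the strict monotonicity of $(k_p)$ and $a > 0$; the required convexity-type inequality
$$\frac{y_{n_p} - y_{n_{p-1}}}{n_p - n_{p-1}} - \frac{y_{n_{p+1}} - y_{n_p}}{n_{p+1} - n_p} \ge 0, \qquad 1 \le p \le N,$$
is, after multiplication by $a$, exactly the strict inequality assumed on the $(k_p)$, with the boundary case $p = N$ automatic by the conventions on $n_{N+1}$ and $k_{N+1}$.

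Next I would invoke the inverse formula recorded in Remark \ref{lacunary sequences of actions}(ii): the unique preimage of $\mathrm{y}_J$ under $\check\Omega_J$ is
$$\gamma_{n_p} = \frac{y_{n_p} - y_{n_{p-1}}}{n_p - n_{p-1}} - \frac{y_{n_{p+1}} - y_{n_p}}{n_{p+1} - n_p},$$
which is exactly the formula defining $\gamma$ in the statement. In particular $\check\omega_{n_p}(\gamma) = y_{n_p} = k_p/a$, and then \eqref{formula frequencies} gives
$$\omega_{n_p}(\gamma) = n_p^2 - 2\check\omega_{n_p}(\gamma) = n_p^2 - \frac{2k_p}{a} \in \frac{1}{a}\Z,$$
since $a n_p^2 - 2k_p \in \Z$.

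For the final conclusions I would use Theorem \ref{nonlinear FT}. Because $J$ is finite and $\gamma_n = 0$ for $n \notin J$, every $u_0 \in \mathrm{Iso}_\gamma$ satisfies $\zeta_n(u_0) = 0$ for $n > n_N$, so $u_0 \in \mathcal U_{n_N}$ is a finite gap potential. For the time evolution, the only possibly non-zero Birkhoff coordinates of $u(t) = \mathcal S(t) u_0$ are $\zeta_{n_p}(u(t)) = e^{it\omega_{n_p}}\zeta_{n_p}(u_0)$, $1 \le p \le N$. With $T = 2\pi a$, one has $T\omega_{n_p} = 2\pi a\omega_{n_p} \in 2\pi\Z$ for each $p$, so $\zeta(u(T)) = \zeta(u_0)$, and applying the homeomorphism $\Phi^{-1}$ from Theorem \ref{nonlinear FT} yields $u(T) = u_0$; i.e.\ $\mathrm{Iso}_\gamma$ is filled with $T$-periodic solutions.

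The argument is essentially a verification — Proposition \ref{check Omega homeo} together with its finite-$J$ analogue in Remark \ref{lacunary sequences of actions}(ii) does the real work. The only point that demands care is handling the boundary conventions $n_0 = 0$, $n_{N+1} = n_N + 1$, $k_0 = 0$, $k_{N+1} = k_N$ so that the convexity condition of $\mathfrak c_{\uparrow, J}$ holds at the endpoint $p = N$; once that bookkeeping is set up correctly, the rest is immediate.
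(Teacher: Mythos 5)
Your proof is correct and follows essentially the same route as the paper, whose entire proof is the single line that the claims follow from Remark \ref{lacunary sequences of actions}(ii); you have simply written out the verification that $(k_p/a)_{1\le p\le N}$ lies in $\mathfrak c_{\uparrow,J}$, applied the finite-$J$ inverse formula to recover the stated $\gamma_{n_p}$ and $\check\omega_{n_p}=k_p/a$, and concluded periodicity via (NF2) of Theorem \ref{nonlinear FT}. No gaps.
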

 
\begin{remark}
Since by Theorem \ref{traveling waves}, the traveling waves of the BO equation coincide with the one gap solutions, 
it follows from Proposition \ref{periodic finite gap solutions} that there is a plentitude of periodic in time solutions of \eqref{BO} 
which are finite gap solutions, but not traveling waves.
\end{remark}
 
\begin{proof}
The claimed results follow from Remark \ref{lacunary sequences of actions}(ii).
\end{proof}

\smallskip
\noindent
{\bf Quasiperiodic solutions.}
The aim of this paragraph to construct quasiperiodic solutions of \eqref{BO}, which are {\em not} finite gap solutions.
We begin with describing the  $\omega$--quasiperiodic in time solutions of \eqref{BO}
in terms of the map $\Phi$ of Theorem \ref{nonlinear FT}
where $\omega$ is a frequency vector in  $\R^d$, $d \ge 2$, with $\Q $--linearly independent components. 
%In particular, we prove that, for every $s> -\frac 12$, there exist quasiperiodic solutions which belong to $$H^s(\T)\setminus \bigcup_{\sigma >s}H^\sigma (\T ).$$
\begin{definition}\label{quasip}
Let $E$ be a Banach space and $\omega \in \R^d$, $d \ge 2$ with $\Q $--linearly independent components. 
A function $u\in C(\R, E)$ is said to be $\omega$--quasiperiodic if there exists
a function $U\in C(\T^d,E)$,
% \T^d:=(\R/2\pi \Z)^d$, 
so that $u(t)=U(t\omega)$ for any $t\in \R$. 
Here, by notational convenience,
the vector $t\omega$ denotes also the class of vectors $t\omega +(2\pi \Z)^d$. 
The function $U$ is referred to as the {\em profile} of $u$.
\end{definition} 

\begin{proposition}\label{quasipHs}
Let $U : \T^d \to H^s_{r,0}$ with $s>-1/2$ and let $\omega$ be a vector in $\R^d$, $d \ge 2$,
with $\Q$--linearly independent components. 
%Denote by $\Phi  : H^s_{r,0} \to h_+^{s+1/2}$ the Birkhoff map associated to the BO equation. 
Then
$U$ is the profile of a $\omega $--quasiperiodic solution of \eqref{BO} in $H^s_{r,0}$ if and only if
$\Phi (U(\varphi ))$ is of the form
\begin{equation}\label{form of Phi U}
\Phi (U(\varphi ))=\big(\zeta_n{\rm e}^{i k^{(n)} \cdot \varphi } \big)_{n\ge 1}\,  , \qquad \forall \,  \varphi \in \T^d \, ,
\end{equation}
where $(\zeta_n)_{n\ge 1}\in h_+^{s+1/2}$ and $(k^{(n)})_{n\ge 1}$ is a sequence in $\Z^d$ with the property 
that for any $n \ge 1$,
\begin{equation}\label{dichotomy}
\zeta_n=0 \qquad {\rm or } \qquad   k^{(n)} \cdot \omega = n^2 - 2\sum_{k=1}^n k |\zeta_k|^2 - 2n \sum_{k > n}  |\zeta_k|^2 \,  .
\end{equation}
\end{proposition}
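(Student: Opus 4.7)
The plan is to treat the two directions of the equivalence separately, using the nonlinear Fourier transform $\Phi$ from Theorem \ref{nonlinear FT} to conjugate the BO flow to the explicit flow (NF2) on $h^{s+1/2}_+$, and then to exploit that $\{t\omega \bmod (2\pi\Z)^d : t \in \R\}$ is dense in $\T^d$ by Kronecker's theorem, since the components of $\omega$ are $\Q$--linearly independent.

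For the easy direction ($\Leftarrow$), suppose $\Phi(U(\varphi))$ has the form \eqref{form of Phi U} with the dichotomy \eqref{dichotomy}. Put $u(t) := U(t\omega)$. Then $\Phi(u(t))_n = \zeta_n e^{i k^{(n)}\cdot (t\omega)} = \zeta_n e^{i(k^{(n)}\cdot \omega)t}$. If $\zeta_n=0$ this is $0$; otherwise, by \eqref{dichotomy}, $k^{(n)}\cdot\omega = \omega_n$, the $n$th BO frequency associated to $(\zeta_n)_{n\ge 1}$. Either way, $\Phi(u(t))$ matches the evolution prescribed by (NF2) with initial data $U(0)$, so $u = \mathcal S(t)U(0)$ is a solution of \eqref{BO} in $H^s_{r,0}$ with profile $U$.

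For the harder direction ($\Rightarrow$), assume $u(t) = U(t\omega)$ solves \eqref{BO}. Set $\zeta_n := \zeta_n(U(0))$ and let $\omega_n$ be the corresponding BO frequency from \eqref{formula frequencies intro}. By (NF2), $\Phi(u(t))_n = \zeta_n e^{i\omega_n t}$ for every $t \in \R$ and $n\ge 1$. The map $g_n := \Phi\circ U \to \C$, restricted to the $n$th coordinate, is a continuous function $\T^d \to \C$ which equals $\zeta_n e^{i\omega_n t}$ on the dense orbit $\{t\omega\}$. If $\zeta_n = 0$, then $g_n$ vanishes on a dense set, hence identically, and we pick any $k^{(n)} \in \Z^d$; this lands in the first branch of \eqref{dichotomy}. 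Otherwise set $f_n(\varphi) := g_n(\varphi)/\zeta_n$; from $|f_n(t\omega)|=1$ on a dense set, continuity gives $f_n : \T^d \to \S^1$.

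The main step, and the one carrying the real content, is to show that $f_n$ is a character of $\T^d$. On the orbit one has the relation $f_n((t+s)\omega) = e^{i\omega_n(t+s)} = f_n(t\omega) f_n(s\omega)$. Given arbitrary $\varphi,\psi \in \T^d$, pick by Kronecker's theorem sequences $t_j,s_j \in \R$ with $t_j\omega \to \varphi$ and $s_j\omega \to \psi$ in $\T^d$; continuity of $f_n$ and of the group operation passes the multiplicativity to the limit, yielding $f_n(\varphi+\psi)=f_n(\varphi)f_n(\psi)$ on all of $\T^d$. Any continuous character of $\T^d$ has the form $\varphi\mapsto e^{i k^{(n)}\cdot\varphi}$ for a unique $k^{(n)}\in\Z^d$, so $g_n(\varphi)=\zeta_n e^{ik^{(n)}\cdot\varphi}$. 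Substituting $\varphi=t\omega$ and comparing with $\zeta_n e^{i\omega_n t}$ for all $t\in\R$ forces $k^{(n)}\cdot\omega = \omega_n$, which is the second branch of \eqref{dichotomy}. This completes the proof. The only technical nuance to watch for is that multiplicativity must be extended to the full torus before invoking Pontryagin duality; this is why the $\Q$--linear independence of the components of $\omega$, used via Kronecker density, is essential.
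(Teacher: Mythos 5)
Your proof is correct, but the forward direction takes a genuinely different route from the paper's. The paper computes all Fourier coefficients of $\xi_n := (\Phi\circ U)_n$ at once via the unique ergodicity of the linear flow on $\T^d$: $\widehat \xi_n(k) = (2\pi)^{-d}\int_{\T^d}\xi_n(\varphi)e^{-ik\cdot\varphi}\,d\varphi = \lim_{T\to\infty}T^{-1}\int_0^T\xi_n(t\omega)e^{-itk\cdot\omega}\,dt$, which equals $\zeta_n$ when $k\cdot\omega=\omega_n$ and $0$ otherwise; the dichotomy then reads off from whether $\omega_n$ lies in $\omega\cdot\Z^d$. You instead normalize $g_n$ to a circle-valued function $f_n$, transport the multiplicative relation $f_n((t+s)\omega)=f_n(t\omega)f_n(s\omega)$ from the dense orbit to all of $\T^d$ by continuity, and invoke the classification of continuous characters of $\T^d$. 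Both arguments rest on the $\Q$--linear independence of the components of $\omega$ --- the paper through Weyl equidistribution, you through Kronecker density plus Pontryagin duality. Your route is slightly more elementary in that it needs only topological density of the orbit rather than equidistribution, at the price of the extra step of extending multiplicativity off the orbit (which you handle correctly, including $f_n(0)=1$ and the uniqueness of $k^{(n)}$). One small omission in your converse direction: for $U$ to qualify as a \emph{profile} in the sense of Definition \ref{quasip} it must be continuous, which is not assumed in the hypothesis; as in the paper, this follows because $\varphi\mapsto(\zeta_n e^{ik^{(n)}\cdot\varphi})_{n\ge 1}$ is continuous into $h_+^{s+1/2}$ (dominated convergence) and $\Phi^{-1}$ is continuous by (NF1). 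With that remark added, the argument is complete.
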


\begin{remark}
For any $\omega$-quasiperiodic solution of \eqref{BO} with action variables 
$\gamma = (\gamma_n)_{n \ge 1} \in \ell^{1, 1 + 2s}_{\ge 0}$, the invariant torus
$$
\text{Iso}_\gamma := \big\{ u \in H^s_{r, 0} \,\big| \, \gamma_n(u) = \gamma_n \ \forall \, n \ge 1\big\}
$$ 
is filled with $\omega$-quasiperiodic solutions of \eqref{BO}. The corresponding profiles
are given by \eqref{form of Phi U} with $(\zeta_n)_{n \ge 1}$ being an arbitrary element in the set $\Phi(\text{Iso}_\gamma)$.
\end{remark}

\begin{proof}
Let $U$ be the profile of an $\omega $--quasiperiodic solution $u(t)$  in $H^s_{r,0}(\T )$ of \eqref{BO}. 
It is to show that \eqref{form of Phi U}--\eqref{dichotomy} hold.
Let
$$ (\xi_n(\varphi ))_{n\ge 1}:= \Phi (U(\varphi )) \in h_+^{s+1/2}\, \qquad \forall \, \varphi \in \T^d .$$
Since by Definition \ref{quasip}, $U$ is in $C(\T^d,H^s_{r,0})$, the map
$\T^d \to h_+^{s+1/2}, \,  \varphi \mapsto (\xi_n(\varphi))_{n \ge 1}$ is continuous
and by (NF2) in Theorem \ref{wellposedness}, for any $n \ge 1$,
\begin{equation}\label{formula time evolution of xi}
\zeta_n{\rm e}^{it\omega_n}  = \xi_n(t\omega )\, , \qquad \,  \forall \, t \in \R \, ,
\end{equation}
where $(\zeta_n)_{n \ge 1} := (\xi_n(0))_{n \ge 1}\in h_+^{s+1/2}$.
%and  $\omega_n$, $n \ge 1$, denote the BO frequencies
%corresponding to the actions $(|\zeta_j|^2)_{j \ge 1} \in \ell^{1, 1+2s}$,
%$$\omega _n = n^2-2\sum_{j=1}^\infty \min(j,n)|\zeta_j|^2\ .$$
Since by assumption, the components of $\omega $ are linearly independent in $\Q$, 
the Fourier coefficients $\widehat \xi_n (k)$, $k\in \Z^d$, of $\xi_n$ can be computed as
$$\widehat \xi_n (k) = (2\pi)^{-d}\int_{\T^d}\xi_n(\varphi ){\rm e}^{-i  k \cdot \varphi }\, d\varphi 
=\lim_{T\to \infty }T^{-1}\int_0^T \xi_n(t\omega ){\rm e}^{-it k \cdot \omega }\, dt \, .
$$
Furthermore, by the formula \eqref{formula time evolution of xi} for $\xi_n(t\omega )$, one has
$$
\lim_{T\to \infty }T^{-1}\int_0^T \xi_n(t\omega ){\rm e}^{-it k \cdot \omega }\, dt =
\zeta_n \lim_{T\to \infty }T^{-1}\int_0^T {\rm e}^{it (\omega_n - k \cdot \omega) }\, dt\, .
$$
Note that the right hand side of the latter identity vanishes if $ \omega_n \ne k \cdot \omega  $, and equals 
$\zeta_n$ if $\omega_n =  k \cdot \omega  $.
%As by assumption,  the components of $\omega $ are linearly independent in $\Q$,  there exists a unique vector $k^{(n)} \in \Z^d$ 
%so that $\omega_n =  k^{(n)} \cdot \omega  $. 
Consequently, for any given $n\ge 1$, the following dichotomy holds:  in the case where there is no $k\in \Z^d$, 
satisfying $\omega_n = k \cdot \omega $,  
it follows that $\widehat \xi_n (k)  = 0$
for any $k \in \Z^d$. Hence the continuous function $\xi_n$ vanishes, implying that $\zeta_n = \xi_n(0) = 0$. 
Otherwise, since the components of $\omega$ are linearly independent over $\Q$, 
there exists exactly one $k^{(n)}\in \Z^d$ such that $\omega_n = k^{(n)} \cdot \omega $
and $ \xi_n(\varphi )$ equals $\zeta_n {\rm e}^{i k^{(n)} \cdot \varphi  }$. We thus have proved that \eqref{form of Phi U}--\eqref{dichotomy} hold.
 
Conversely, if $\Phi \circ U$ is given by the expression \eqref{form of Phi U}, 
$U$ is a continuous map $\T^d \to H^s_{r,0}$ since $\Phi^{-1} $ is continuous. Furthermore by \eqref{dichotomy},
$\Phi(U(t\omega ))=(\zeta_n{\rm e}^{it\omega_n})_{n\ge 1}$
so that $t\mapsto U(t\omega )$ is a $\omega$-quasiperiodic solution of \eqref{BO} with profile $U$.
\end{proof}

The following result illustrates how Proposition \ref{quasipHs} can be used to construct
 $\omega$-quasiperiodic solutions of \eqref{BO},
which are not $C^\infty$-smooth, hence in particular not finite gap solutions.

\begin{theorem}\label{nonsmooth qp solutions}
Let $b $ be an irrational real number and $\omega :=(1, b)\in \R^2$. For any $s>-1/2$, there exists a $\omega$--quasiperiodic solution 
 of the BO equation in $H^s_{r,0} \setminus \bigcup_{\sigma>s} H^\sigma_{r, 0}$.
\end{theorem}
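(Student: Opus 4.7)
The plan is to invoke Proposition \ref{quasipHs} and reduce the problem to producing a sequence $(\zeta_n)_{n\ge 1}\in h_+^{s+1/2}$ that lies outside $h_+^{\sigma+1/2}$ for every $\sigma>s$, together with vectors $k^{(n)}=(a_n,m_n)\in\Z^2$ such that $a_n+m_n b=\omega_n$ whenever $\zeta_n\neq 0$. Since the frequencies $\omega_n$ depend only on $\gamma_n:=|\zeta_n|^2$, the phases of $\zeta_n$ may be prescribed arbitrarily; I will concentrate $(\zeta_n)$ on a rapidly increasing lacunary set $J=\{n_p:p\ge 1\}$ and set all other $\zeta_n=0$.

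The construction mimics the proof of Theorem \ref{periodic solutions which are not finite gap}(ii), with two main modifications. The first is that the Weyl-type density of the set $\{n^2+kb:n\in\Z_{\ge 0},\,k\in\Z\}$ in $\R$ is replaced by the elementary density of $\Z+b\Z$ in $\R$, valid for any irrational $b$. The second is that the decay rate of the parameters $\epsilon_p$ is tuned to produce sharp $H^s$ regularity rather than merely some subcritical Sobolev regularity. Concretely, I fix a positive target $y_\infty$, choose a positive sequence $\epsilon_p\searrow 0$ with $2\epsilon_{p+1}<\epsilon_p$, and then inductively select $n_p$ (with $n_p\ge 2n_{p-1}$) and integers $a_p,m_p$ so that
\[
\rho_p:=2y_\infty-n_p^2+a_p+m_p b\in[\epsilon_p,2\epsilon_p],
\]
which is possible by density of $\Z+b\Z$. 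Setting $y_{n_p}:=y_\infty-\rho_p/2$ and defining $\gamma_{n_p}$ through second differences as in Remark \ref{lacunary sequences of actions}(i) yields $\gamma_{n_p}>0$, and unpacking the formula of Theorem \ref{nonlinear FT}(NF2) gives $\omega_{n_p}=a_p+m_p b\in\Z+b\Z$, so that Proposition \ref{quasipHs} applies with $k^{(n_p)}=(a_p,m_p)$.

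The calibration step is the crux of the argument. To land exactly at critical regularity $s$ I aim for $\gamma_{n_p}\asymp n_p^{-(2s+1)}/p^2$: this yields $\sum_p n_p^{2s+1}\gamma_{n_p}\sim\sum_p p^{-2}<\infty$, placing $u$ in $H^s_{r,0}$, while for every $\sigma>s$ one has $\sum_p n_p^{2\sigma+1}\gamma_{n_p}\sim\sum_p n_p^{2(\sigma-s)}p^{-2}=\infty$, excluding $u$ from $H^\sigma_{r,0}$. Unraveling the formula
\[
\gamma_{n_p}=\frac{\rho_{p-1}-\rho_p}{2(n_p-n_{p-1})}-\frac{\rho_p-\rho_{p+1}}{2(n_{p+1}-n_p)},
\]
this target is reached by taking $n_p=N^p$ with $N$ sufficiently large depending on $s$, and $\epsilon_p$ of order $N^{-2sp}/p^2$; the constraint $n_{p+1}\ge 2n_p$ is easily accommodated by enlarging $n_p$ when invoking density of $\Z+b\Z$.

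The main obstacle I anticipate lies precisely in this calibration for the range $s\in(-1/2,0]$: there the actions need not belong to $\ell^{1,1}_{\ge 0,J}$, so the concavity bookkeeping of Remark \ref{lacunary sequences of actions}(i) (phrased for $\gamma\in\ell^{1,1}_{\ge 0,J}$) no longer applies verbatim and $\check\omega_{n_p}$ need not stay bounded. In that regime I would forgo the $\check\omega$-based recipe and work directly from the explicit formula $\omega_n=n^2-2\sum_{k\le n}k|\zeta_k|^2-2n\sum_{k>n}|\zeta_k|^2$ of Theorem \ref{nonlinear FT}(NF2): start from tentative target actions $\gamma_{n_p}^{*}\asymp n_p^{-(2s+1)}/p^2$, and then perturb them one index at a time by nonnegative amounts so tiny that they do not affect the sharp regularity, each perturbation being used---via density of $\Z+b\Z$---to move the next frequency $\omega_{n_p}$ onto the lattice $\Z+b\Z$. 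Once such actions and integer vectors $k^{(n)}$ are in place, Proposition \ref{quasipHs} produces the required $\omega$-quasiperiodic solution with the prescribed critical regularity.
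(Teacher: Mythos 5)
Your reduction via Proposition \ref{quasipHs}, the replacement of Weyl's theorem by the density of $\Z+b\Z$ (legitimate here because $\omega=(1,b)$ has an integer component that absorbs $n_p^2$), and the regularity calibration $\gamma_{n_p}\asymp n_p^{-(2s+1)}/p^2$ are all sound, and for $s\ge 0$ your lacunary, $\check\omega$--based construction can indeed be carried out along the lines of Theorem \ref{periodic solutions which are not finite gap}(ii). The gap is exactly where you locate it, namely $-1/2<s<0$, and the fallback you propose there does not close. In that range one must have $\sum_n n\gamma_n=\infty$ (otherwise $u\in H^0_{r,0}$), so $y_\infty$ and the $\check\omega_{n_p}$ are infinite and the $\rho_p$--parametrization is unavailable; but the ``perturb one index at a time'' scheme is self-contradictory. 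Since $\omega_{n_q}$ contains the term $-2\min(n_p,n_q)\,\gamma_{n_p}$, a perturbation $\delta_p$ of $\gamma_{n_p}$ shifts every already-arranged frequency $\omega_{n_q}$, $q<p$, by $-2n_q\delta_p$. Preserving the conditions $\omega_{n_q}\in\Z+b\Z$ forces $2n_q\delta_p\in\Z+b\Z$ for all $q<p$; for a sequence such as $n_p=N^p$ (where $n_q$ divides $n_p$) this forces $2n_p\delta_p\in\Z+b\Z$ as well, and then the perturbation cannot move $\omega_{n_p}$ onto the lattice unless it already lies there. A perturbation generic enough to create the new condition necessarily destroys the old ones.

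The paper's proof removes this circularity by a different bookkeeping, uniform in $s>-1/2$: it takes \emph{all} $\gamma_n>0$ (no lacunary support) and observes that the conditions $k^{(n)}\cdot\omega=\omega_n$ are equivalent to the decoupled second-difference conditions $\ell^{(n)}\cdot\omega=2+2\gamma_n$ (each involving only $\gamma_n$) together with a single anchor condition $k^{(1)}\cdot\omega=1-2\sum_{j}\gamma_j$, which involves only the $\ell^1$--sum of the actions --- finite for every $s>-1/2$ since $\gamma\in\ell^{1,1+2s}$ with $1+2s>0$. The second-difference conditions are arranged by density of $\Z+b\Z$ one $n$ at a time with no interaction between them; the anchor is then met by a greedy decomposition $1-2\sum_j\overline\gamma_j=\sum_j\delta_j$ with $\delta_j\in\omega\cdot\Z^2$ and $0<\delta_j<2^{1-j}$, distributing the corrections as $\gamma_j=\overline\gamma_j+\delta_j/2$, which leaves both the second-difference conditions and the critical regularity intact. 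You would need to import this decoupling (or an equivalent device) to handle $-1/2<s<0$.
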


\begin{proof}
Let $s > -1/2$ be given. 
In view of Proposition \ref{quasipHs}, it suffices to find 
a sequence $(\zeta_n)_{n\ge 1}$ in $h_+^{s+1/2} \setminus \cup_{\sigma>s} h_+^{\sigma +1/2}$ with $\zeta_n = |\zeta_n| > 0$
and a sequence $(k^{(n)})_{n\ge 1}$ in $\Z^2$ so that 
$$  
k^{(n)} \cdot \omega  = n^2 - 2\sum_{k=1}^n k \zeta_k^2 - 2n \sum_{k > n}  \zeta_k^2\, , 
\qquad \, \forall n\ge 1 \, .
$$
The latter identities imply that for any $n \ge 1$,
\begin{eqnarray*}
( k^{(n+1)} - k^{(n)} )\cdot \omega  &=&2n+1-2\sum_{j=n+1}^\infty \zeta_j^2\ ,\\
(  k^{(n+1)}-2k^{(n)}+k^{(n-1)} )\cdot \omega &=&2+2\zeta_n^2
\end{eqnarray*}
with $k^{(0)}:= 0 \in \Z^2$. It is convenient to reformulate our problem. Let
$$
\ell^{(n)}:=  k^{(n+1)}-2k^{(n)}+k^{(n-1)}\, , \qquad \forall \, n \ge 1\, .
$$
Since $ k^{(n+1)}=  \ell^{(n)} + 2k^{(n)} - k^{(n-1)}$, $n\ge 1$,
our problem can be described equivalently as follows:  find a sequence $(\gamma_n)_{n\ge 1}$ in $\R_{>0}$,  
belonging to $ \ell_+^{1,1+2s}\setminus \cup_{\sigma>s}\ell_+^{1,1+2\sigma }$, a sequence $(\ell^{(n)})_{n\ge 1}$ in $\Z^2$, 
and  $k^{(1)}\in \Z^2$ so that $k^{(0)}=0$ and
\begin{equation}\label{setup problem}
 \ell^{(n)} \cdot \omega  =2+2\gamma_n\, ,  \quad \forall  \, n\ge 1\, , \qquad   k^{(1)} \cdot \omega  =1-2\sum_{j=1}^\infty \gamma_j  \,  ,
\end{equation}
where for any $n \ge 1$, $\gamma_n$ is related to $\zeta_n$ by $\zeta_n=\sqrt{\gamma_n}$. 

Using the density of the additive subgroup $ \omega \cdot \Z^2   =\Z + b \Z$ in $\R $, 
it is straightforward to construct sequences $(\gamma_n)_{n\ge 1}$ and $(\ell^{(n)})_{n\ge 1}$,
which satisfy the first set of identities in \eqref{setup problem}. 
But it is more involved to construct such sequences satisfying at the same time
the second identity in \eqref{setup problem}, which can be rephrased as
\begin{equation}\label{condition k^1}
1-2\sum_{j=1}^\infty \gamma_j\in \omega \cdot \Z^2\, .
\end{equation}
Accordingly, we proceed in two steps: \\
{\em Step 1.}
Let $(\e_n)_{n\ge 1}$ be a sequence in $\R_{>0}$ that belongs to the set
$\ell_{\ge 0}^{1,1+2s}\setminus\cup_{\sigma>s}\ell_{\ge 0}^{1,1+2\sigma}$ 
and satisfies
\begin{equation}\label{bound epsilons}
4\sum_{n=1}^\infty \e_n<1\ .
\end{equation}
By the density of  $\omega \cdot \Z^2$ in $\R$,  there exist  $ m^{(n)} \in \Z^2$, $n \ge 1$, so that 
$$2+\e _n\leq  m^{(n)} \cdot \omega \leq 2+2\e_n\ .$$
For any $n \ge 1$, let $\overline \gamma_n$ be the number in $[\e_n/2, \e_n]$, defined by
$$ 
m^{(n)} \cdot \omega =2+2\overline \gamma_n \,  .
$$
By \eqref{bound epsilons} it then follows that 
$$
x :=1-2\sum_{j=1}^\infty \overline \gamma_j \in\Big(\frac{1}{2},1\Big) \,  .
$$
{\em Step 2.} We correct $m^{(n)}$ and $\overline \gamma_n$ so that \eqref{condition k^1} is satisfied
with $k^{(1)} = 0$.
To this end, we inductively construct a sequence $(\delta_n)_{n\ge 1}$ in $\R_{>0}$,
which belongs to $\omega \cdot \Z^2$ and satisfies
\begin{equation}\label{delta}
x=\sum_{j=1}^\infty \delta_j \, , \qquad   0 < \delta_n< 2^{1-n} \, ,  \quad   \forall \, n\ge 1\,  .
\end{equation}
We begin with $\delta_1$. Since $\frac 12 <x<1$,
there exists $\delta_1\in \omega \cdot  \Z^2 $ so that
$x -\frac 12 <\delta_1 <x - \frac 14$.
It follows that $0 < \delta_1< 1$ and that $y_1:= x - \delta_1$ satisfies
$$
\frac 14 < y_1 < \frac 12 \, , \qquad x = \delta_1 + y_1\, .
$$ 
Since $\frac 14 < y_1 < \frac 12$,
there exists $\delta_2\in \omega \cdot \Z^2 $ so that
$y_1-\frac 14<\delta_2<y_1-\frac 18$.
One concludes that $0<\delta_2<2^{-1}$ and that $y_2:= y_1 - \delta_2$ satisfies
$$\frac 18< y_2 < \frac 14\, , \qquad
x = \delta_1 + \delta_2 + y_2    \,  .$$
Continuing inductively in this way, we construct sequences $(y_n)_{n \ge 1}$, 
$(\delta_n)_{n \ge 1}$ in $\R_{>0}$ with $(\delta_n)_{n \ge 1}$ belonging to $\omega \cdot \Z^2$, so that for any $n \ge 1$,
$$
0 < \delta_n < \frac{1}{2^{n-1}} \, , \qquad
\frac 1{2^{n+1}}< y_n< \frac 1{2^n}  \, ,  \qquad    
x = \sum_{j = 1}^n \delta_j + y_n\, .
%y_{n+1}=y_n-\delta_{n+1} \,  .
$$
Hence we obtain
$ x=\sum_{j=1}^\infty \delta_j$.
By construction,   $\delta_n$ is of the form $\delta_n =  p^{(n)} \cdot \omega$ with $p^{(n)}\in \Z^2$ 
and hence we define
$$
\gamma_n :=\overline \gamma_n +\frac{\delta_n}{2} > 0  \, ,  \quad 
\ell^{(n)}:=  m^{(n)} +p^{(n)} \in \Z^2  \,  , \qquad \forall \, n \ge 1 \, .
$$
Since $(\overline \gamma_n)_{n\ge 1}$ is in 
$\ell_{\ge 0}^{1,1+2s} \setminus 
\cup_{\sigma>s} \ell_{\ge 0}^{1,1+2\sigma}$,
and since  $\delta_n$ satisfies $0 < \delta _n < 2^{1-n}$ for any $n \ge 1,$
$(\gamma_n)_{n\ge 1}$ is also in $\ell_{\ge 0}^{1,1+2s}\setminus \cup_{\sigma>s}\ell_{\ge 0}^{1,1+2\sigma}$. Furthermore,
$$
2+2\gamma_n=  2 + 2 \overline \gamma_n +\delta_n 
= m^{(n)} \cdot \omega+p^{(n)}\cdot\omega =  \ell^{(n)} \cdot \omega  
$$
and $k^{(1)} = 0$, as
$
1-2\sum_{j=1}^\infty \gamma_j= 1 - 2\sum_{j=1}^\infty \overline \gamma_j  - \sum_{j=1}^\infty \delta_j=0=  0 \cdot \omega  \,  .
$
\end{proof}
\begin{remark}
In contrast to the periodic in time solutions of \eqref{BO} of Theorem \ref{periodic solutions which are not finite gap}
(cf. also Remark \ref{lacunary structure}(ii)),
the action variables of the $\omega$--quasiperiodic solutions constructed in the proof of Theorem \ref{nonsmooth qp solutions} 
are all strictly positive. 
\end{remark}

%%%%%%%%%%%%%%%%%%%%%%%%%%%%%%%%%%%%%%%%%%%%%%%%%%%%%%%%%%%%%
%%%%%%%%%%%%%%%%%%%%%%%%%%%%%%%%%%%%%%%%%%%%%%%%%%%%%%%%%%%%%%

\appendix

\section{Generating function}\label{generating function}

The aim of this appendix is to show that the generating function $\mathcal H_\lambda(u)$,
defined in \eqref{def generating function} (cf. \cite{GK}, \cite{GKT1}), 
is the relative determinant of  $L_u + \lambda + 1$ by $L_u + \lambda$,
where $L_u$ is the Lax operator of \eqref{BO} (cf. \eqref{Lax}). 
%$$
%\mathcal H_\lambda(u) = \langle (L_u + \lambda \text{Id})^{-1} 1 | \, 1 \rangle 
%$$

First, let us introduce the notion of a relative determinant in a setup, sufficient for our purposes. 
To motivate the definition of such a determinant, consider a positive Hermitian $N \times N$ matrix $A$
with complex valued coefficients.
We list its eigenvalues in increasing order and with their multiplicities,  $\mu_1 \le \mu_2 \le  \cdots \le \mu_N$
and  consider the one parameter family of matrices $A+ \lambda$.  
It is then straightforward to verify that for any $\lambda > - \mu_1$,  the following, well-known formula holds,
$$
\frac{d}{d\lambda} 
\log (\mathrm{det}(A + \lambda) )
= \text{\rm trace} (A + \lambda)^{-1}\, .
$$
This formula motivates the following definition of a relative determinant.
Let $A$ and $A_0$ be two self-adjoint operators,  
acting on the Hardy space $H_+$ and assume that both are semibounded (from below) and 
have compact resolvents. Hence their spectrum is discrete, real, and bounded from below.
The spectrum of $A$ and the one of $A_0$ then consist of  increasing sequences $(\mu_n)_{n \ge 0}$ 
and  $(\nu_n)_{n \ge 0}$,  respectively, of real eigenvalues, converging to $\infty$.
Let $\lambda_* :=  \max\{- \mu_0, - \nu_0\}$. Then for any $\lambda > \lambda_*$, 
$A+\lambda$ and $A_0+\lambda$ are invertible.
We say that $A + \lambda$ admits a determinant relative to $A_0 + \lambda$ (for $\lambda > \lambda_*$), 
if for any  $\lambda > \lambda_*$, $(A + \lambda)^{-1} - (A_0 + \lambda)^{-1}$ is of trace class and 
if there exists a $C^1$-function, 
$$
( \lambda_*, \infty) \to \mathbb R_{>0}, \, \lambda \mapsto  \mathrm{det}_{A_0 + \lambda}(A + \lambda) \, ,
$$
satisfying the normalization condition $\det_{A_0 + \lambda}(A + \lambda) \to 0$ as $\lambda \to \infty$,
and for any $\lambda_* < \lambda < \infty $, the variational formula
$$
\frac{d}{d\lambda} 
\log \big(\mathrm{det}_{A_0 + \lambda}(A + \lambda) \big)
= \text{\rm trace} \big( (A +\lambda)^{-1} - (A_0 + \lambda)^{-1} \big) \, .
$$
To state our result on $\mathcal H_\lambda(u)$, recall that
for any $u \in H^s_{r, 0}$ with $s > -1/2$,  the spectrum of the Lax operator $L_u$
is given by a sequence of simple real  eigenvalues,  $\lambda_0 < \lambda_1 < \cdots$, and  (cf. \cite{GK}, \cite{GKT1})
\begin{equation}\label{formula eigenvalues}
 \lambda_n = n - \sum_{k > n} \gamma_k\, , \qquad   \gamma_k:= \lambda_k - \lambda_{k -1} - 1 \ge 0\, , \quad \forall \, k \ge 1\, .
 \end{equation}
The sequence $(\gamma_k)_{k \ge 1}$ is an element in the weighted $\ell^1$-space $\ell^{1, 1 + 2s}$. 
\begin{proposition}
For any $u \in H^{s}_{r,0}$ with $s > -1/2$ the following holds:\\
(i) For any $\lambda > - \lambda_0$,  $(L_u + 1 + \lambda)^{-1} - (L_u + \lambda)^{-1}$ is of trace class.\\
(ii) $L_u + 1+ \lambda$ admits a determinant relative to $L_u + \lambda$ and 
$$
\mathcal H_\lambda (u) = \mathrm{det}_{L_u + \lambda}(L_u + 1 +\lambda)\, , \qquad \forall \, \lambda >  - \lambda_0 \, .
$$
\end{proposition}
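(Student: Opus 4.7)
The plan for part (i) is a direct spectral decomposition. Since $L_u$ is self-adjoint on $H_+$ with discrete simple spectrum $\lambda_0<\lambda_1<\cdots$, fix an orthonormal eigenbasis $(\phi_n)_{n\ge 0}$. Relative to it, $R(\lambda):=(L_u+1+\lambda)^{-1}-(L_u+\lambda)^{-1}$ is diagonal with eigenvalues $-\bigl((\lambda_n+1+\lambda)(\lambda_n+\lambda)\bigr)^{-1}$. By \eqref{formula eigenvalues} and $(\gamma_k)\in\ell^{1,1+2s}\subset\ell^1$, we have $\lambda_n=n+o(1)$, and for $\lambda>-\lambda_0$ every factor in the denominator is positive, so $\sum_n|\lambda_n+1+\lambda|^{-1}|\lambda_n+\lambda|^{-1}<\infty$. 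Hence $R(\lambda)$ is trace class, with $\text{\rm trace}\,R(\lambda)=\sum_{n\ge 0}\bigl(\tfrac{1}{\lambda_n+1+\lambda}-\tfrac{1}{\lambda_n+\lambda}\bigr)$.

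For part (ii), expand $1=\sum_n\langle 1,\phi_n\rangle\phi_n$ and set $c_n:=|\langle 1,\phi_n\rangle|^2$; Parseval gives $\sum_n c_n=1$ and
\[
\mathcal H_\lambda(u)=\sum_{n\ge 0}\frac{c_n}{\lambda_n+\lambda}.
\]
From this representation, positivity is immediate (not all $c_n$ vanish), the decay $\mathcal H_\lambda(u)\le(\lambda_0+\lambda)^{-1}\to 0$ as $\lambda\to\infty$ is obvious, and $C^1$-regularity on $(-\lambda_0,\infty)$ follows by uniform convergence on compacta. The heart of the proof is the variational identity $\tfrac{d}{d\lambda}\log\mathcal H_\lambda(u)=\text{\rm trace}\,R(\lambda)$. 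I would derive it from the product formula
\[
\mathcal H_\lambda(u)=\frac{1}{\lambda_0+\lambda}\prod_{n\ge 1}\frac{\lambda_{n-1}+1+\lambda}{\lambda_n+\lambda},
\]
whose $n$th factor equals $1-\gamma_n/(\lambda_n+\lambda)$, so absolute convergence on $\{\lambda>-\lambda_0\}$ follows from $\sum_n\gamma_n<\infty$. Taking logarithms, differentiating term by term, and reindexing give
\[
\tfrac{d}{d\lambda}\log\mathcal H_\lambda(u)=-\tfrac{1}{\lambda_0+\lambda}+\sum_{n\ge 1}\Bigl(\tfrac{1}{\lambda_{n-1}+1+\lambda}-\tfrac{1}{\lambda_n+\lambda}\Bigr)=\sum_{n\ge 0}\Bigl(\tfrac{1}{\lambda_n+1+\lambda}-\tfrac{1}{\lambda_n+\lambda}\Bigr)=\text{\rm trace}\,R(\lambda).
\]

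The main obstacle is establishing the product formula itself. Both sides are meromorphic in $\lambda$ with simple poles located only at the points $-\lambda_n$, and both share the asymptotic $1/\lambda$ as $\lambda\to\infty$: from $\sum c_n=1$ on the spectral side, and from $\prod(1-\gamma_n/(\lambda_n+\lambda))\to 1$ on the product side (by dominated convergence, using $\sum\gamma_n<\infty$). By a Liouville-type argument it therefore suffices to match residues at each $-\lambda_n$, which reduces to the norming-constant identity
\[
c_n=\frac{-\gamma_n}{\lambda_0-\lambda_n}\prod_{\substack{m\ge 1\\ m\ne n}}\frac{\lambda_{m-1}+1-\lambda_n}{\lambda_m-\lambda_n}\qquad(n\ge 1),
\]
together with the analogous formula at $n=0$. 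This identity encodes the inverse spectral theory of $L_u$ and is available from the construction of the Birkhoff map $\Phi$ in \cite{GK}, where the eigenfunctions $\phi_n$ and hence the weights $c_n$ are computed explicitly via the action of the shift operator on the Hardy space $H_+$. Invoking it completes the proof.
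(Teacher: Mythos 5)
Your part (i) is the same as the paper's (diagonalize the resolvent difference in the eigenbasis and use $\lambda_n=n-\sum_{k>n}\gamma_k$), but your part (ii) takes a genuinely different route. The paper never goes through the product representation: it computes $\mathrm{trace}\bigl((L_u+1+\lambda)^{-1}-(L_u+\lambda)^{-1}\bigr)$ a second time, operator-theoretically, using the shift identity $S^*(L_u+\lambda)S=L_u+\lambda+1$ together with the rank-one resolvent formula for $(S^*(L_u+\lambda)S)^{-1}$; summing in the Fourier basis $(e^{inx})_{n\ge 0}$ and in the eigenbasis gives $\mathrm{trace}=-\|w_\lambda\|^2/\langle w_\lambda\,|\,1\rangle$ with $w_\lambda=(L_u+\lambda)^{-1}1$, which is literally $\tfrac{d}{d\lambda}\log\langle w_\lambda|1\rangle=\tfrac{d}{d\lambda}\log\mathcal H_\lambda(u)$. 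This is self-contained modulo two elementary identities from \cite{GK}, and the product formula \eqref{product representation} then falls out as a corollary. You instead take the product formula as the crux and reduce it, via residue matching, to the norming-constant identity, which you import from \cite{GK}. That is legitimate and your residue formula is the correct one, but be aware of what it costs: (a) you are outsourcing exactly the step that the appendix is designed to prove from scratch (and the proof of those formulas in \cite{GK} is essentially the shift-operator computation above, so you gain no economy — if you are willing to cite \cite{GK} at that level you could cite \cite[Proposition 3.1]{GK} for the product formula directly and drop the Liouville step); (b) the ``Liouville-type argument'' needs more than matching residues and decay along the positive real axis — the poles accumulate at $-\infty$, so you must bound the entire difference uniformly on contours threading between the poles (using $\sum_n c_n<\infty$ and $\sum_n\gamma_n<\infty$) before Liouville applies; and (c) in the reindexing step $-\tfrac{1}{\lambda_0+\lambda}+\sum_{n\ge1}\bigl(\tfrac{1}{\lambda_{n-1}+1+\lambda}-\tfrac{1}{\lambda_n+\lambda}\bigr)=\sum_{n\ge0}\bigl(\tfrac{1}{\lambda_n+1+\lambda}-\tfrac{1}{\lambda_n+\lambda}\bigr)$ the two split series diverge separately, so the rearrangement must be done on partial sums, with the boundary term $1/(\lambda_N+\lambda)\to0$. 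All three points are repairable, so the argument stands, but the paper's direct trace computation avoids them entirely.
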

\begin{proof} We use arguments developed in the proof of \cite[Lemma 3.2]{GK}.\\
(i)  By functional calculus, $(L_u + 1 + \lambda)^{-1} - (L_u + \lambda)^{-1}$ is a self-adjoint operator on $H_+$ with eigenvalues
$$
\frac{1}{\lambda_n + 1 + \lambda } - \frac{1}{\lambda_n  + \lambda} = -  \frac{1}{(\lambda_n + 1 + \lambda )(\lambda_n  + \lambda )} \, .
$$
One then concludes from \eqref{formula eigenvalues} and the decay properties of the $\gamma_n$
that $(L_u + 1 + \lambda)^{-1} - (L_u + \lambda)^{-1}$ is of trace class for any $\lambda > - \lambda_0$ and that its trace is given by
$- \sum_{n \ge 0}  (\lambda_n + 1 + \lambda )^{-1}(\lambda_n  + \lambda )^{-1}$. \\
(ii) Denote by $S$ the shift operator on $H_+$ and by $S^*$ its adjoint,
$$
S : H_+ \to H_+, \, f \mapsto e^{ix} f\, \qquad  S^* : H_+ \to H_+, \, f \mapsto e^{-ix} (f - \langle f | 1 \rangle 1)\, .
$$
By a straightforward computation, one has (cf. \cite[(3.3)]{GK})
\begin{equation}\label{identity 1}
S^*(L_u + \lambda) S = L_u + \lambda + 1
\end{equation}
and (cf. \cite[Lemma 3.1]{GK})
\begin{equation}\label{identity 2}
(S^*(L_u + \lambda) S)^{-1} = S^* (L_u + \lambda)^{-1} S - 
\frac{\langle \cdot \, | \, S^*w_\lambda \rangle}{\langle w_\lambda \, | \, 1 \rangle} S^*w_\lambda\, ,
\end{equation}
where $w_\lambda := (L_u + \lambda)^{-1}1$.
Combining the identities \eqref{identity 1}-\eqref{identity 2}, one gets
\begin{equation}\label{trace 1}
(L_u + 1 + \lambda)^{-1} - (L_u + \lambda)^{-1} = I_\lambda -  II_\lambda \, ,
\end{equation}
where
$$
I_\lambda := S^* (L_u + \lambda)^{-1} S - (L_u + \lambda)^{-1} \, ,
\qquad
II_\lambda := \frac{\langle \cdot \, | \, S^*w_\lambda \rangle}{\langle w_\lambda \, | \, 1 \rangle} S^*w_\lambda\, .
$$
Note that $II_\lambda$ is an operator of rank 1 and hence in particular of trace class. Its trace can be computed as follows.
Denote by $(f_n)_{n \ge 0}$ the orthonormal basis of eigenfunctions of $L_u$, introduced in \cite{GK} ($s=0$) and \cite{GKT1} ($-1/2 < s < 0$).
Computing the trace of $II_\lambda$ with respect to this basis one obtains from Parseval's equality,
$$
\text{\rm trace} (II_\lambda) = \sum_{n \ge 0}  \frac{\langle f_n \, | \, S^*w_\lambda \rangle}{\langle w_\lambda \, | \, 1 \rangle} \langle S^*w_\lambda \, | \, f_n \rangle 
= \frac{\|S^*w_\lambda\|^2}{\langle w_\lambda \, | \, 1 \rangle}\,.
$$
Using that $S^* w_\lambda = e^{-ix} w_\lambda - \langle w_\lambda | 1 \rangle e^{-ix}$ and that 
$$
\langle w_\lambda | 1 \rangle = \langle (L_u + \lambda)^{-1} 1 | 1 \rangle \ge 0 
$$ 
since $(L_u + \lambda)^{-1}$ is a positive operator, one infers
%$\langle w_\lambda \, | \, 1 \rangle = \sum_{n \ge 0} \frac{| \langle f_n | 1 \rangle |^2}{\lambda_n + \lambda}$
\begin{equation}\label{trace II lambda}
\rm{trace} (II_\lambda) =  \frac{\| S^*w_\lambda \|^2}{\langle w_\lambda \, | \, 1 \rangle}
= \frac{\| w_\lambda \|^2}{\langle w_\lambda \, | \, 1 \rangle} -  \frac{| \langle w_\lambda | \, 1 \rangle |^2 }{\langle w_\lambda \, | \, 1 \rangle}
=  \frac{\| w_\lambda \|^2}{\langle w_\lambda \, | \, 1 \rangle} -  \langle w_\lambda | \, 1 \rangle \, .
\end{equation}
Since, by \eqref{trace 1}, the operator $I_\lambda$ is the sum of two operators of trace class, it is itself of trace class. 
Computing its trace with respect to the orthonormal basis $(e^{inx})_{n \ge 0}$, one obtains
\begin{align}\label{trace I lambda}
\text{\rm trace} (I_\lambda)  & = \sum_{n \ge 0} \langle (L_u +\lambda)^{-1} Se^{inx} \, | \, S e^{inx} \rangle  - \langle (L_u +\lambda)^{-1} e^{inx} \, | \, e^{inx} \rangle \nonumber\\
& = - \langle (L_u +\lambda)^{-1} 1  \, | \, 1 \rangle = - \langle w_\lambda | \, 1 \rangle\, .
\end{align}
Combining \eqref{trace 1}, \eqref{trace II lambda}, and \eqref{trace I lambda} we arrive at
$$
\text{\rm trace} \big( (L_u + 1 + \lambda)^{-1} - (L_u + \lambda)^{-1} \big) 
= -  \frac{\| w_\lambda \|^2}{\langle w_\lambda \, | \, 1 \rangle}\, .
$$
Since
$$
\langle w_\lambda \, | \, 1 \rangle = \sum_{n \ge 0} \frac{| \langle 1 \, | \, f_n \rangle |^2}{\lambda_n +\lambda}\, , \qquad
\| w_\lambda \|^2 = \sum_{n \ge 0} \frac{| \langle 1 \, | \, f_n \rangle |^2}{(\lambda_n +\lambda)^2}
$$
one has\footnote{Note that by \eqref{def generating function}, for a given $u\in H^s_{r,0}$,
${\mathcal H}_\lambda(u)$ is real analytic for 
$\lambda>-\lambda_0$.}
$$
-\frac{\| w_\lambda \|^2}{\langle w_\lambda \, | \, 1 \rangle} = 
\frac{d}{d\lambda} \log\Big( \sum_{n \ge 0} \frac{| \langle 1 \, | \, f_n \rangle |^2}{\lambda_n +\lambda}\Big)
$$
and hence
\begin{equation}\label{variational formula for mathcal H}
\frac{d}{d\lambda} \log\Big( \sum_{n \ge 0} \frac{| \langle 1 \, | \, f_n \rangle |^2}{\lambda_n +\lambda}\Big) = 
\text{\rm trace} \big( (L_u + 1 + \lambda)^{-1} - (L_u + \lambda)^{-1} \big) \, .
\end{equation}
Since when expanding $\mathcal H_\lambda(u) = \langle (L_u + \lambda \text{Id})^{-1} 1 | \, 1 \rangle$  
with respect to the orthonormal basis $(f_n)_{n \ge 0}$, one obtains
$$
\mathcal H_\lambda(u) = \sum_{n \ge 0}\frac{| \langle 1 \, | \, f_n \rangle |^2}{\lambda_n + \lambda}
$$
and since $\lim_{\lambda \to \infty} \mathcal H_\lambda(u) = 0$ we proved that $\mathcal H_\lambda(u)$
is the determinant of $L_u + 1 + \lambda$ relative to $L_u + \lambda$.
\end{proof}

\smallskip

In the remaining part of this appendix we study in more detail, how
for any given $u \in H^s_{r,0}$, $s > -1/2$, 
$\mathcal H_\lambda(u)$ is related to the spectrum of $L_u$.
First note that it follows from  \eqref{variational formula for mathcal H} that 
$$
\frac{d}{d\lambda} \log \mathcal H_\lambda  =  \frac{d}{d\lambda} \left( \log\Big(\frac{1}{\lambda_0 + \lambda}\Big) 
+\sum_{n \ge 1} \log\Big( \frac{\lambda_{n-1} + 1 + \lambda}{\lambda_{n}  + \lambda}\Big)\right)\, ,
$$
implying that (cf. \cite[Proposition 3.1]{GK}, \cite{GKT1})
\begin{equation}\label{product representation}
\mathcal H_\lambda(u) = \frac{1}{\lambda_0 + \lambda} \prod_{n\ge 1} \frac{\lambda_{n-1}  + 1 + \lambda}{\lambda_n + \lambda}\ . 
\end{equation}
Therefore, $\mathcal H_\lambda(u)$ is determined by the periodic spectrum of $L_u$. Since the latter is invariant by the flow of \eqref{BO},
$\mathcal H_\lambda(u)$ is a one parameter family of prime integrals of this equation. 
The question arises if conversely,  $\mathcal H_\lambda(u)$ determines the spectrum of $L_u$.
To this end we take a closer look at the product representation \eqref{product representation} of $\mathcal H_\lambda(u)$.
Setting $ \nu_n := \lambda_{n-1} +1$ for any $n \ge 1$, one has
$$
\lambda_0 < \nu_1 \le \lambda_1 < \nu_2 \le \lambda_2 < \cdots \, .
$$
Furthermore, for any $n \ge 1$, $ \nu_n = \lambda_n$ if and only if $\gamma_n = 0$. Hence
$$
\mathcal H_\lambda(u) = \frac{1}{\lambda_0 + \lambda} \prod_{n\in J_u} \frac{\nu_n + \lambda}{\lambda_n + \lambda} \, ,
\qquad  J_u:= \{ n \ge 1 \,|\, \gamma_n > 0  \} \, .
$$
Note that $\nu_n$, $n \in J_u$, are the zeros of $\mathcal H_\lambda(u) $ and $\lambda_n$, $n \in J_u\cup \{0\}$, its poles.
All the poles and zeros of $\mathcal H_\lambda(u)$ are simple.
The question raised above can now be rephrased as follows: 
does $\mathcal H_\lambda(u)$ besides $\nu_n(u)$, $n \in J_u$, and $\lambda_n(u)$, $n \in J_u\cup\{0\}$,
also determine the eigenvalues $\lambda_n(u)$ with $\gamma_n(u) = 0$?
The following result says that this is indeed the case.

\begin{proposition}\label{eigenvalues with gamma 0}
For any $u \in H^s_{r, 0}$, $s > -1/2$, the generating function $\mathcal H_\lambda(u)$ 
determines the entire spectrum of $L_u$.
\end{proposition}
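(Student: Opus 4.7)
The plan is to read the accessible data ($\lambda_0$ together with the sequences $(\lambda_{n_p})_{p}$ and $(\nu_{n_p})_{p}$) directly off the meromorphic function $\lambda \mapsto \mathcal H_\lambda(u)$, and then reconstruct the eigenvalues $\lambda_n$ with $\gamma_n = 0$ by a simple recursion.

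Starting from the product representation \eqref{product representation} (and its refined form in terms of $J_u$ recorded just above the Proposition), all poles and zeros of $\mathcal H_\lambda(u)$ are simple. As a meromorphic function of $\lambda$, $\mathcal H_\lambda(u)$ therefore uniquely determines its pole set $\{-\lambda_n : n \in J_u \cup \{0\}\}$ and its zero set $\{-\nu_n : n \in J_u\}$. Ordering these increasingly and writing $J_u = \{n_1 < n_2 < \cdots\}$ (the case of finite $J_u$ being analogous), one thus reads off $\lambda_0$ as the unique smallest pole, the increasing sequence $(\lambda_{n_p})_{p \ge 1}$ of the remaining poles, and the increasing sequence $(\nu_{n_p})_{p \ge 1}$ of zeros. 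A priori, however, the integer indices $n_p$ are not directly visible in this data.

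The key step fills in both the indexing and the missing eigenvalues at once. Since $n \notin J_u$ is equivalent to $\gamma_n = 0$, i.e.\ to $\lambda_n = \lambda_{n-1} + 1$, iterating this relation over the indices strictly between two consecutive elements $n_{p-1}, n_p$ of $J_u \cup \{0\}$ (setting $n_0 := 0$) gives
\[
\lambda_{n_{p-1} + k} = \lambda_{n_{p-1}} + k, \qquad 0 \le k \le n_p - n_{p-1} - 1.
\]
Combining this with $\nu_{n_p} = \lambda_{n_p - 1} + 1$ yields the identity
\[
n_p - n_{p-1} = \nu_{n_p} - \lambda_{n_{p-1}},
\]
whose right-hand side lies in the data extracted from $\mathcal H_\lambda(u)$ and whose left-hand side is a positive integer. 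This identity inductively pins down the index $n_p$ starting from $(n_0, \lambda_{n_0}) = (0, \lambda_0)$, and simultaneously produces every intermediate eigenvalue $\lambda_{n_{p-1}+k}$ for $0 \le k < n_p - n_{p-1}$. When $J_u$ is finite with maximum element $n_N$, the tail is recovered the same way: $\gamma_n = 0$ for $n > n_N$ gives $\lambda_n = \lambda_{n_N} + (n - n_N)$.

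No serious obstacle is anticipated: the argument is a short bookkeeping exercise on \eqref{product representation}. The only conceptual ingredient is the observation that the integer gap $n_p - n_{p-1}$ is encoded in the real-valued difference $\nu_{n_p} - \lambda_{n_{p-1}}$, so that the unknown indexing scheme is in fact completely determined by the readable data.
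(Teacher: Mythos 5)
Your proof is correct and follows essentially the same route as the paper's: both reconstruct the missing eigenvalues from the simple poles and zeros of $\mathcal H_\lambda(u)$ via the relation $\gamma_n=0\Leftrightarrow\lambda_n=\lambda_{n-1}+1$, the paper by an inductive dichotomy (is $\lambda_{n-1}+1$ a zero or not?) and you by the equivalent closed-form recursion $n_p-n_{p-1}=\nu_{n_p}-\lambda_{n_{p-1}}$. Your explicit identity for the index gaps is a tidy repackaging of the same argument, and the verification checks out.
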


\begin{proof} Since $\lambda_0$ is a pole of $\mathcal H_\lambda(u)$, it is determined by the generating  function.
If $\nu_1\equiv\lambda_0 + 1$ is a zero  of $\mathcal H_\lambda(u)$, then $\nu_1 = \lambda_0 + 1 < \lambda_1$ and hence 
$\lambda_1$ is a pole of $\mathcal H_\lambda(u)$. If $\lambda_0 + 1$ is not a zero of $\mathcal H_\lambda(u)$, then 
$ \lambda_0 + 1$ is the periodic eigenvalue $\lambda_1$ of $L_u$ and hence also determined by $\mathcal H_\lambda(u)$. 
Arguing inductively, the claimed result follows.
\end{proof}

\begin{remark} 
With the help of a conformal map, Hochstadt proved
a result corresponding to the one of Proposition \ref{eigenvalues with gamma 0} for Hill's operator, which is a
Lax operator for the Korteweg-de Vries equation (cf. \cite{Hoch},  \cite{McKT}). Note that in contrast, the proof 
of Proposition \ref{eigenvalues with gamma 0} is elementary.
\end{remark}

%%%%%%%%%%%%%%%%%%%%%%%%%%%%%%%%%%%%%%%%%%%%%%%%%%%%%%%%%%%%%%%%%%%%%%%%%%%
%%%%%%%%%%%%%%%%%%%%%%%%%%%%%%%%%%%%%%%%%%%%%%%%%%%%%%%%%%%%%%%%%%%%%%%%%%%

\end{document}